\newtheorem{theorem}{Theorem}[section]
\newtheorem{lemma}[theorem]{Lemma}
\newtheorem{remark}[theorem]{Remark}
\newtheorem{problem}[theorem]{Problem}
\newtheorem{claim}{Claim}[section]
\numberwithin{equation}{section}
\newcommand{\R}{\mathbb R}
\newcommand{\N}{\mathbb N}
\newcommand{\M}{\mathcal M}
\newcommand{\Sp}{\mathbb S}
\newcommand{\essinf}{\mathop{\rm essinf}}
\begin{document} 
\title[Existence results for some problems on Riemannian manifolds]{Existence results for some problems\\ on Riemannian manifolds}
\author{Giovanni Molica Bisci}
\address[G. Molica Bisci]{Dipartimento P.A.U. \\
UniversitÃ  degli Studi Mediterranea di Reggio Calabria\\
Salita Melissari - Feo di Vito\\
89124 Reggio Calabria, Italy}
\email{\tt gmolica@unirc.it}
\author{Du\v{s}an D. Repov\v{s}}
\address[D.D. Repov\v{s}]{Faculty of Education, and Faculty of Mathematics and Physics\\
University of Ljubljana \&
Institute of Mathematics, Physics and Mechanics, 1000 Ljubljana, Slovenia}
\email{\tt dusan.repovs@guest.arnes.si}
\author{Luca Vilasi}
\address[L. Vilasi]{Department of Mathematical and Computer Sciences, Physical Sciences and Earth Sciences\\
University of Messina\\
Viale F. Stagno d'Alcontres, 31\\
98166 Messina, Italy}
\email{\tt lvilasi@unime.it}
\keywords{Yamabe equation, Riemannian manifold, Emden-Fowler equation, Laplace-Beltrami operator, existence of solutions \\
\phantom{aa} 2010 AMS Subject Classification: Primary:  35A15, 35S15, 49J35; Secondary:  45G05, 47G20.}
\begin{abstract}
By using variational techniques we provide new existence results for Yamabe-type equations with subcritical perturbations set on a compact $d$-dimensional ($d\geq 3$) Riemannian manifold without boundary. As a direct consequence of our main theorems, we prove the existence of at least one solution to the following singular Yamabe-type problem
$$
\left\lbrace
\begin{array}{ll}
-\Delta_g w + \alpha(\sigma)w  = \mu K(\sigma) w^\frac{d+2}{d-2} +\lambda \left( w^{r-1} + f(w)\right),  \quad \sigma\in\M &\\
&\\
w\in H^2_\alpha(\M), \quad  w>0 \mbox{ in } \mathcal M &
\end{array}
\right.
$$
where, as usual, $\Delta_g$ denotes the Laplace-Beltrami operator on $(\M,g)$, $\alpha, K:\M\to\R$ are positive (essentially) bounded functions, $r\in(0,1)$, and $f:[0,+\infty)\to[0,+\infty)$ is a subcritical continuous function. Restricting ourselves to the unit sphere ${\mathbb{S}}^d$
via the
stereographic projection, we also solve some parametrized Emden-Fowler equations in the Euclidean case.
\end{abstract} 
\maketitle
\section{Introduction}\label{sec:introduzione}
In the present paper we explore the existence of solutions to the
following  problem
\begin{equation}\tag{$P_{\lambda,\mu}$}\label{problema}
-\Delta_g w + \alpha(\sigma)w =\mu K(\sigma)|w|^\frac{4}{d-2}w \!+\! \lambda f(w), \quad \sigma\!\in\!\M, \; w\!\in\! H^2_1(\M),
\end{equation}
where $(\M,g)$ is a compact $d$-dimensional Riemannian manifold without boundary, $d\geq 3$,
 $\Delta_g$ is the Laplace-Beltrami operator on $(\M,g)$, expressed in local coordinates $(x_1,\ldots,x_d)$ by
$$
\Delta_g w := g^{ij}\left(\frac{\partial^2 w}{\partial x^i\partial x^j} -\Gamma_{ij}^k\frac{\partial w}{\partial x^k} \right)
$$
where $\Gamma_{ij}^k$ are the well-known Christoffel
symbols,
 the functions $\alpha, K:\M\to\R$ belong to the class
\begin{equation}\label{classeLambdapiu}
\Lambda_+(\M):=\left\lbrace \varphi\in L^\infty(\M,\R):\essinf_{\sigma\in\M} \varphi(\sigma)>0\right\rbrace,
\end{equation}
$\lambda,\mu$ are positive real parameters, and $f:\R\to\R$ is a continuous function with subcritical growth. 

Problem \eqref{problema} displays the same structure as the Yamabe equation with the addition of a subcritical perturbation term. As is well-known, the latter equation is intimately related to the following problem arising in differential geometry.

\begin{problem}
Given a $d$-dimensional compact Riemannian manifold $(M,g)$, $d>2$, with scalar curvature $k=k(\sigma)$, find a metric $\tilde g$ conformal to $g$, with constant scalar curvature $\tilde k$.
\end{problem}

\noindent Setting $\tilde g := w^{4/(d-2)} g$, where $w>0$ is the conformal factor, this requirement can be stated in terms of partial differential equations as follows:\\

\emph{Find $w\in C^\infty(\M)$, $w>0$, satisfying
\begin{equation}\label{yamabe}
-\Delta_g w +\frac{d-2}{4(d-1)}k(\sigma) w = \tilde k w^\frac{d+2}{d-2} \quad \text{\rm in }\M.
\end{equation}}

The history of equation \eqref{yamabe} and attempts on its resolution spans more than two decades. In 1960, Yamabe \cite{yam1960on} claimed to have found a solution but his proof turned out to be wrong and was repaired by Trudinger \cite{tru1968remarks} for the case when the conformal class of the reference metric is nonpositive. Some years later, Aubin \cite{aub1976equations} provided a positive answer to the question for all manifolds of dimension $d\geq 6$ which are not conformally flat; the remaining  and more difficult cases were definitively solved by Schoen \cite{sch1984conformal}.

Elliptic problems set on compact Riemannian manifolds without boundary, as well as their applications to Emden-Fowler equations, have been the object of recent study, especially in the sublinear case. In \cite{krirad2009sublinear} Krist\'{a}ly and R\u{a}dulescu obtained multiple solutions to the problem
\begin{equation}\tag{$P_{\mu}$} \label{problemapmu}
-\Delta_{g} w + \alpha (\sigma) w= \mu
K(\sigma)f(w),\quad \sigma\in {\mathcal{M}} ,\ w\in H_1^2({\mathcal{M}}),
\end{equation}
when $f$ has a sublinear behaviour at infinity. More precisely, in their Theorem 1.1 they proved the existence of two nontrivial solutions to \eqref{problemapmu}, for $\mu$ sufficiently large, through a careful analysis of the mountain pass geometry of the functional energy. Under some natural compactness assumptions, a similar approach applies to problem \eqref{problema} as well, and leads to the existence of at least one nontrivial solution, as shown in our Theorem \ref{secondoteorema} (see Section \ref{mountainpass}).

However, here we are going to propose a different approach, quite new in the framework of elliptic equations on manifolds, which in particular does not use the celebrated Lions concentration-compactness principle (see \cite{lio1985the}) or one of its many variants. As it is well-known, the latter represents a major tool in the study of critical elliptic equations and has been intensively used in literature in many different contexts.

Our first existence result (Theorem \ref{principale}) relies upon direct minimization techniques on small balls of the energy space. To be more precise, we prove that, for every $\mu>0$, the restriction of the functional
\begin{align*}
	w &\mapsto \frac{1}{2}\int_{\M} \left( |\nabla w(\sigma)|^2 + \alpha(\sigma)w(\sigma)^2\right)  d\sigma_g\\
	&\quad -\frac{\mu}{2^*}\int_\M K(\sigma)|w(\sigma)|^{2^*}d\sigma_g, \quad w\in H_\alpha^2(\M),
\end{align*}
to a ball of $H_\alpha^2(\M)$ centered at $0$ and of small enough radius, is sequentially weakly lower semicontinuous (see Lemma \ref{lemmasemicontinuita}).

 Consequently, the whole energy functional associated with \eqref{problema} is locally
sequentially weakly lower semicontinuous and so it will admit, for any $\mu>0$ and sufficiently small $\lambda$, a local minimum. We point out that throughout our paper the nonlinearity $f$ is merely required to be subcritical and with an appropriate asymptotic behaviour at $0$. The key point is the smallness of $\lambda$, which needs to belong to the range of a suitable rational function, involving among other things the sharp constant of the continuous (but not compact) embedding $H_\alpha^2(\M)\hookrightarrow L^{2d/(d-2)}(\M)$, see Remark \ref{costanti}.

We furthermore emphasize that our approach, originally due to Cha\-bro\-wski \cite{cha1995on}, has been previously used in literature for studying classical quasilinear $p$-Laplacian equations involving critical nonlinearities (see \cite{farfar2015a}, by
 which this paper was inspired, and \cite{squ2004two}). For the sake of completeness, we also mention \cite{molvil2020isometry}, where the noncompact
 analogue
  of \eqref{problema} was investigated, and the recent papers \cite{hajmolvil2016existence,mawmol2017a,molrep2017yamabe} where, by adopting similar variational methods, the existence of weak solutions for nonlocal fractional equations and subelliptic problems on Carnot groups were studied in situations of lack of compactness.

Theorem \ref{principale} has connections with several results obtained in the case of bounded domains of $\R^d$, like the main one of \cite{tar1992on}, where Tarantello showed that the problem
\begin{equation}\label{Tarantello1}
\begin{cases}
-\Delta u=|u|^\frac{4}{d-2} u +h(x) \quad \mbox{ in } \Omega\\
u=0 \quad \mbox{ on } \partial \Omega,
\end{cases}
\end{equation}
where $\Omega\subset\R^d$, $d>2$, and $h\in H^{-1}(\Omega)$, admits two distinct weak solutions provided that $h$ has a sufficiently small $\|\cdot\|_{-1,2}$-norm, namely
$$
\|h\|_{-1,2}\leq \frac{4}{d-2}\left(\frac{d-2}{d+2}\right)^{(d+2)/4}S^{d/4},
$$
where $S$ is the best critical Sobolev constant for the
embedding $H^1_0(\Omega)\hookrightarrow L^{2^*}(\Omega)$.

 There are other connections with the central result of \cite{gazruf1997lower}, dealing with the perturbed critical elliptic problem
\begin{equation*}\label{GaRu22}
\begin{cases}
-\Delta u=|u|^\frac{4}{d-2} u + g(x,u) \quad {\mbox{ in }} \Omega\\
u=0 \quad {\mbox{ on }} \partial \Omega,
\end{cases}
\end{equation*}
where $\Omega\subset\R^d$, $d\geq 3$, and the nonlinearity $g$ has a subcritical growth at infinity.

Among all cases covered by \eqref{problema}, a remarkable one is given by
\begin{align}\tag{$\widetilde P_{\lambda,\mu}$}\label{casosfera}
	&\quad -\Delta_{h}w + s(1-s-d)w \\
	\notag &\qquad= \mu K(\sigma)|w|^\frac{4}{d-2}w + \lambda f(w),\quad \sigma\in {\mathbb{S}}^d,\, w\in H_1^2({\mathbb{S}}^d),
\end{align}
where ${\mathbb{S}}^d$ is the unit sphere in $\R^{d+1}$, $h$ is the standard metric induced by the embedding ${\mathbb{S}}^d\hookrightarrow \R^{d+1}$, $s$ is a constant such that $1-d<s<0$ and $\Delta_h$ denotes the Laplace-Beltrami operator on $({\mathbb{S}}^d,h)$. It yields equation \eqref{yamabe} on $\Sp^d$ when $s=-d/2$ or $s=-d/2+1$ (see for instance \cite{aub1998some,cotili1991equations,cotili1995equations,heb2000nonlinear,kazwar1975scalar,vazver1991solutions} and references therein for a wider framework on these topics).

 Problem \eqref{casosfera} has an analogue in the Euclidean context: any existence result for \eqref{casosfera} indeed can be translated, by an appropriate change of coordinates, into an existence result for the following parametrized Emden-Fowler type equation,
\begin{align}\tag{$\overline P_{\lambda,\mu}$}\label{emdenfowler}
	-\Delta u  &=\mu |x|^\frac{2(2-2s-d)}{d-2}K\left(\frac{x}{|x|}\right)|u|^\frac{4}{d-2} u\\ \notag &\quad + \lambda |x|^{s-2} f\left(\frac{u}{|x|^s}\right) , \quad x\in \R^{d+1}\setminus\{0\},
\end{align}
see Section \ref{SectionEF} for details. The latter represents a generalization of the more common (and unperturbed) equation
\begin{equation}\tag{$\overline P_\mu$}\label{emdenfowlernonperturbata}
-\Delta u =
\mu|x|^{s-2}K\left( \frac{x}{|x|}\right) f\left(\frac{u}{|x|^s}\right), \quad x\in \R
^{d+1}\setminus \{0\},
\end{equation}
which, in the presence of a pure superlinear nonlinearity (that is, for $f(t)=|t|^{p-1}t$, $p>1$), has been treated by minimization and minimax techniques in the classical papers \cite{cotili1995equations} and \cite{vazver1991solutions}.

If the perturbation term in \eqref{problema} is modified with the addition of a singular power at zero, then via the same approach as before
(cf. also \cite{farfar2015a}), it is still possible to get nontrivial solutions to the singular Yamabe problem
\begin{equation*}\label{problemasingolare}
\begin{cases}
-\Delta_g w + \alpha(\sigma)w  = \mu K(\sigma) w^\frac{d+2}{d-2} +\lambda \left( w^{r-1} + f(w)\right),  \quad \sigma\in\M \\
w\in H^2_\alpha(\M), \quad  w>0 \mbox{ in } \mathcal M,
\end{cases}
\end{equation*}
where $r\in(0,1)$ and $f:[0,+\infty)\to[0,+\infty)$ is continuous and subcritical, see Theorem \ref{princsingolare}.\par
 As a consequence, it is straightforward to deduce the existence of at least one nontrivial solution to the following singular equation
\begin{equation*}
\begin{cases}
-\Delta_h w + s(1-s-d)w  = \mu K(\sigma) w^\frac{d+2}{d-2} +\lambda \left( w^{r-1} + f(w)\right),  \quad \sigma\in \Sp^d \\
w\in H^2_1(\Sp^d), \quad  w>0 \mbox{ in } \Sp^d,
\end{cases}
\end{equation*}
and this is the content of our Theorem \ref{Th11}.\par
For the sake of completeness we cite related paper \cite{filpucrob2009on} in which the authors studied the existence of nonnegative solutions for the doubly critical equation of the form
$$
-\Delta_p u-\mu\frac{u^{p-1}}{|x|^p}=u^{p^*-1}+\frac{u^{p^*(s)-1}}{|x|^s},\quad {\rm in}\,\,\R^d
$$
where $\Delta_p$ is the usual $p$-Laplace operator (with $1<p<d$ and $d\geq 2$), $\mu$ is a real parameter, $p^*$ is the critical Sobolev exponent, $0<s<p$ and $p^*(s):=p(d-s)/(d-p)$. We also point out that a Lane-Emden-Fowler equation on a bounded Euclidean domain and involving a singular potential was studied in \cite{dupgherad2007lane}. Furthermore, under appropriate spectral assumptions, two existence results for positive solutions of Lichnerowicz-type equations on complete (noncompact) manifolds were proved in \cite{albrig2016lichnerowicz}.\par
The paper is organized as follows. In Section \ref{preliminaries} we review some basic definitions and facts on Sobolev spaces defined on compact Riemannian manifolds, while in Section \ref{directminimization} we state and prove our main result of existence of solutions for \eqref{problema}. Section \ref{mountainpass} is devoted to another existence result, obtained this time as an application of a version of the Mountain Pass Theorem without the Palais-Smale condition due to Brezis and Nirenberg (\cite[Theorem 2.2]{brenir1983positive}) proving an existence result for problem \eqref{problema}.
 Some concrete applications of our general theorems to Emden-Fowler equations are illustrated in the last section.
 
\section{Preliminaries and variational framework}\label{preliminaries}
In this section we recall some notions and basic facts on Sobolev spaces on compact Riemannian manifolds which will be helpful in the sequel. We refer the reader to \cite{aub1998some,heb2000nonlinear} for detailed derivations of the geometric quantities, their motivation and further applications. We also mention recent monograph \cite{kriradvar2010variational} as a general reference on this subject.

Let $(\M,g)$ be a smooth compact $d$-dimensional Riemannian manifold, $d\geq 3$, without boundary and let $g_{ij}$ be the components of the metric $g$. If $\alpha\in\Lambda_+(\M)$ (see \eqref{classeLambdapiu}) and  $C^\infty(\M)$ denotes as usual the space of smooth functions defined on $\M$, we set
\begin{equation}\label{normaHduealpha}
\left\|w\right\|_{H^2_\alpha}:=\left(\int_{\M}  |\nabla w(\sigma)|^2 d\sigma_g+ \int_{\M}\alpha(\sigma)w(\sigma)^2  d\sigma_g\right)^{1/2},
\end{equation}
for every $w\in C^\infty(\M)$, where $\nabla w$ is the covariant derivative of $w$ and $d\sigma_g$ is the Riemannian measure on $\M$. In terms of local coordinates $(x^1,\ldots, x^d)$, $\nabla w$ can be represented by
$$
(\nabla^2 w)_{ij}=\frac{\partial^2 w}{\partial x^i \partial x^j} - \Gamma_{ij}^k \frac{\partial w}{\partial x^k}
$$
where
$$
\Gamma_{ij}^k:=\frac{1}{2}\left(\frac{\partial g_{lj}}{\partial x^i} + \frac{\partial g_{li}}{\partial x^j} - \frac{\partial g_{ij}}{\partial x^k}\right) g^{lk}
$$
are the usual Christoffel symbols and $g^{lk}$ are the elements of the inverse matrix of $g$, Einstein's summation convention being tacitly adopted.\par
 The Riemannian volume element $d\sigma_g$ in \eqref{normaHduealpha} is given by $d\sigma_g=\linebreak (det \, g)^{1/2}\;dx$, where $dx$ stands for the
 Lebesgue
  volume element of $\R^d$, and we set
$$
Vol_g(\M):=\int_{\M}d\sigma_g.
$$
In the special case $(\M,g)=(\Sp^d,h)$, where $\Sp^d$ is the unit sphere in $\R^{d+1}$ and $h$ is the standard metric induced by the embedding $\Sp^d\hookrightarrow\R^{d+1}$, we use the notation
$$
\omega_d:=Vol_h(\Sp^d)=\int_{\Sp^d} d\sigma_h.
$$

The Sobolev space $H_\alpha^2(\M)$ is defined as the completion of $C^\infty(\M)$ with respect to the norm $\left\| \cdot\right\|_{H_\alpha^2}$. Such a space turns out to be a Hilbert space when endowed with the inner product
\begin{align}\label{prodscalareHduealpha}
	\left\langle w_1,w_2\right\rangle_{H_\alpha^2}&=
\int_{\M}\left\langle \nabla w_1(\sigma),\nabla w_2(\sigma) \right\rangle_g  d\sigma_g \\
	\notag &\quad +  \int_{\M} \alpha(\sigma) \left\langle w_1(\sigma), w_2(\sigma)\right\rangle_g d\sigma_g,
\end{align}
for every $w_1,w_2\in H_\alpha^2(\M)$, where $\left\langle \cdot,\cdot\right\rangle_g$ is the inner product on covariant tensor fields associated with $g$. 

Because of the positivity of $\alpha$, $\left\| \cdot\right\|_{H_\alpha^2}$ is equivalent to the standard norm
\begin{equation}
\left\|w\right\|_{H^2_1}=\left(\int_{\M} |\nabla w(\sigma)|^2d\sigma_g + \int_{\M}w(\sigma)^2 d\sigma_g\right)^{1/2} ,
\end{equation}
as is plainly deducible from the following inequalities
\begin{equation}\label{equivalence}
\min\left\lbrace 1,\essinf_{\sigma\in\M}\alpha(\sigma)^{1/2}\right\rbrace \left\| w\right\|_{H_1^2}  \leq \left\| w\right\|_{H_\alpha^2} \leq \max\left\lbrace 1,\left\| \alpha\right\|_\infty^{1/2}\right\rbrace \left\| w\right\|_{H_1^2},
\end{equation}
which hold
 for any $w\in H_\alpha^2(\M)$. Hereafter, we shall drop the subscripts in \eqref{normaHduealpha} and \eqref{prodscalareHduealpha} and denote $\left\| \cdot\right\|_{H_\alpha^2}$ and $\left\langle \cdot,\cdot\right\rangle_{H_\alpha^2}$ simply by $\left\| \cdot\right\|$ and $\left\langle \cdot,\cdot\right\rangle$, respectively.\par
On account of Rellich-Kondrachov's theorem for compact manifolds without boundary, one has
\begin{equation}\label{immersione}
H_\alpha^2(\M)\hookrightarrow L^q(\M) \quad \mbox{ for every } q\in[1,2^*],
\end{equation}
where
$$
2^*=\frac{2d}{d-2},
$$
and the above embedding is also compact whenever $q\in[1,2^*)$. For any $q\in[1,2^*]$ we denote by $\left\| \cdot\right\|_q$ the standard $L^q$-norm on $\M$ and by $c_q$ the best constant of \eqref{immersione}, i.e.
$$
c_q:=\sup_{w\in H_\alpha^2(\M)\setminus\{0\}}\frac{\left\| w\right\|_q}{\left\| w\right\|} >0,
$$
while we reserve the symbol $S$ for the positive constant
\begin{equation}\label{costanteS}
S:=\inf_{w\in H_\alpha^2(\M)\setminus\{0\}}\frac{\left\| w\right\|^2}{\left\| w\right\|_{2^*}^2}.
\end{equation}

The open (respectively, closed) ball centered at $w\in H_\alpha^2(\M)$ of radius $r>0$ will be denoted by $B(w,r)$ (respectively, $B_c(u,r)$), and the sphere $\{z\in H_\alpha^2(\M): \left\| w-z\right\|=r\}$ by $\partial B(w,r)$.

In what follows, we shall make the standing assumptions that $K\in \Lambda_+(\M)$ and $f:\R\to\R$ is locally Lipschitz continuous. It is straightforward to check that problem \eqref{problema} represents the Euler-Lagrange equation of the functional
\begin{align}\label{funzenergia}
	\mathcal E_{\lambda,\mu}(w) &:= \frac{1}{2}\left\| w\right\|^2 - \frac{\mu}{2^*} \int_\M K(\sigma)|w|^{2^*} d\sigma_g \\
	\notag &\quad - \lambda\int_\M  F(w(\sigma))d\sigma_g, \quad w\in H_\alpha^2(\M),
\end{align}
where
$$
F(t):=\int_0^t f(\tau)d\tau \quad \mbox{for all } t\in\R.
$$

The critical points $w$ of $\mathcal E_{\lambda_\mu}$ are therefore the weak solutions to \eqref{problema}, i.e. they satisfy the relationship
$$
\left\langle w,z\right\rangle = \mu \int_\M K(\sigma)|w|^\frac{4}{d-2}wz d\sigma_g + \lambda\int_\M  f(w)z d\sigma_g,
$$
for all $z\in H_\alpha^2(\M)$ and, furthermore, the regularity assumptions on $K$ and $f$ force any weak solution to be actually a classical one.

It is worth observing that, fixing $\lambda,\mu\in\R$, the constant function $w(\sigma)=c\in \R$ is a (trivial) solution to \eqref{problema} if and only if
$$
c=\frac{\mu K(\sigma)|c|^\frac{4}{d-2} c + \lambda f(c)}{\alpha(\sigma)}\quad \mbox{ in }\M.
$$
Thus, constant solutions to \eqref{problema} appear as fixed points of the real function
$$
t\mapsto \frac{\mu K(\sigma)|t|^\frac{4}{d-2} t + \lambda f(t)}{\alpha(\sigma)}, \quad t\in\R.
$$

\begin{remark}\label{costanti}
\rm{It is well-known that sharp Sobolev inequalities play a crucial role in the theory partial differential
equations, from both theoretical and applied point of view,
and there is a broad literature on this subject. In our context, when $(\mathcal{M},g)=(\mathbb{S}^{d},h)$, a concrete upper bound for the constants $c_q$, $q\in[1,2d/(d-2))$, depending on the geometry of $\Sp^d$, can be obtained as
\begin{equation}\label{inequality2}
\displaystyle c_q\leq\frac{\kappa_q}{\min\left\{1,\displaystyle\min_{\sigma\in {\mathbb{S}}^{d}}\alpha(\sigma)^{1/2}\right\}},
\end{equation}
 where
\begin{equation*}\label{K}
\kappa_q:=
\begin{cases}
\omega_d^{\frac{2-q}{2q}} & \mbox{ if }
	q\in [1,2), \\[1ex]
\displaystyle\max\left\{
\displaystyle\left(\frac{q-2}{d}\omega_d^{\frac{2-q}{q}}
\right)^{1/2}, \omega_d^{\frac{2-q}{2q}}
\right\} & \mbox{ if } q\in \left[2,\frac{2d}{d-2}\right) .\\
\end{cases}
\end{equation*}
Indeed, for every $2\leq q< 2d/(d-2)$ and $w\in H^{2}_1({\mathbb{S}}^{d})$, one has
$$
\left(\int_{{\mathbb{S}}^{d}}|w(\sigma)|^qd\sigma_h\right)^{2/q}\leq \frac{q-2}{d\omega_d^{1-2/q}}\int_{{\mathbb{S}}^{d}}|\nabla w(\sigma)|^2d\sigma_h+\frac{1}{\omega_d^{1-2/q}}\int_{{\mathbb{S}}^{d}}|w(\sigma)|^2d\sigma_h,
$$
(cf. for instance \cite{bec1993sharp} and \cite{heb2000nonlinear}) and thus
\begin{align*}
	\|w\|_q &\leq \displaystyle\max\left\{
\displaystyle\left(\frac{q-2}{d}\omega_d^{\frac{2-q}{q}}
\right)^{1/2}, \omega_d^{\frac{2-q}{2q}}
\right\}\\
&\quad\times\left(\int_{{\mathbb{S}}^{d}}|\nabla w(\sigma)|^2d\sigma_h+\int_{{\mathbb{S}}^{d}}|w(\sigma)|^2d\sigma_h\right)^{1/2},
\end{align*}
which, by \eqref{equivalence}, implies the desired estimate. On the other hand, if $q\in [1,2)$, it follows from H\"{o}lder's inequality that
$$
\|w\|_q \leq \omega_d^{\frac{2-q}{2q}}\|w\|_2, \quad \mbox{ for all } w\in L^2({\mathbb{S}}^{d})
$$
and the conclusion is achieved by taking into account that
$$
\|w\|_2 \leq \|w\|_{H^2_1({\mathbb{S}}^{d})}\leq \frac{\|w\|}{\min\left\{1,\displaystyle\min_{\sigma\in {\mathbb{S}}^{d}}\alpha(\sigma)^{1/2}\right\}},
$$
for every $w\in H^{2}_1({\mathbb{S}}^{d})$.
}
\end{remark}

\section{Existence of local minimizers}\label{directminimization}
Our first existence result for problem \eqref{problema} can be stated as follows:
\begin{theorem}\label{principale}
	Let $f:\R\to\R$ satisfy the following requirements:
	\begin{itemize}
		\item[$(f_1)$] there exist $a_1,a_2>0$ and $q\in[1,2^*)$ such that
		$$
		|f(t)|\leq a_1 + a_2|t|^{q-1} \quad  \mbox{ for all } t\in\R,
		$$
		\item[$(f_2)$] $\displaystyle\liminf_{t\to 0^+}\frac{F(t)}{t^2} =+\infty$.
	\end{itemize}
	Furthermore, for any $\mu>0$ let $l_\mu:[0,+\infty)\to\R$ be the function defined by
	\begin{equation}\label{funzionelmu}
	l_\mu(t):=\frac{t-\mu c_{2^*}^{2^*}\left\| K\right\|_\infty  t^{2^*-1}}{a_1 c_{2^*}Vol_g(\M)^\frac{2^*-1}{2^*} + a_2 c_{2^*}^q Vol_g(\M)^\frac{2^*-q}{2^*} t^{q-1}} \quad  \mbox{ for all } t\geq 0.
	\end{equation}
	Then for every $\mu>0$ there exists an open interval
	$$
	\Lambda _\mu \subseteq \left(0,\max_{[0,+\infty)}l_\mu\right)
	$$
	such that, for every $\lambda\in\Lambda_\mu$, \eqref{problema} admits a nontrivial solution $w_{0,\mu,\lambda}\in H_\alpha^2(\M)$.
\end{theorem}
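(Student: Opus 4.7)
My plan is to apply the direct method of the calculus of variations to $\mathcal E_{\lambda,\mu}$ restricted to a small closed ball $B_c(0,\rho)$ of $H_\alpha^2(\M)$, taking advantage of its local weak lower semi-continuity granted by Lemma \ref{lemmasemicontinuita}, and to calibrate the parameters so that the minimum is attained strictly inside the ball and away from the origin.

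First, by Lemma \ref{lemmasemicontinuita} combined with the weak continuity of $w\mapsto\int_\M F(w)\,d\sigma_g$ on $H_\alpha^2(\M)$ (which follows from $(f_1)$, the compactness of the embedding $H_\alpha^2(\M)\hookrightarrow L^q(\M)$ for $q\in[1,2^*)$, and a standard Nemytskii-type argument), there exists $\rho_0>0$ such that $\mathcal E_{\lambda,\mu}$ is sequentially weakly lower semi-continuous on $B_c(0,\rho)$ for every $\rho\in(0,\rho_0]$. Since $B_c(0,\rho)$ is sequentially weakly compact in the Hilbert space $H_\alpha^2(\M)$, the restriction $\mathcal E_{\lambda,\mu}|_{B_c(0,\rho)}$ achieves its infimum at some $w_{0,\mu,\lambda}\in B_c(0,\rho)$.

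Next, I would analyse $\mathcal E_{\lambda,\mu}$ on $\partial B(0,\rho)$: the Sobolev embedding $\|w\|_{2^*}\leq c_{2^*}\|w\|$, H\"older's inequality, and $(f_1)$ combine to give, for $w\in\partial B(0,\rho)$,
\[
\mathcal E_{\lambda,\mu}(w)\geq \tfrac{1}{2}\rho^2 - \tfrac{\mu}{2^*}\|K\|_\infty c_{2^*}^{2^*}\rho^{2^*} - \lambda\Bigl(a_1 c_{2^*}Vol_g(\M)^{\frac{2^*-1}{2^*}}\rho + \tfrac{a_2}{q}c_{2^*}^q Vol_g(\M)^{\frac{2^*-q}{2^*}}\rho^{q}\Bigr).
\]
A direct rearrangement shows that this right-hand side is strictly positive exactly when $\lambda$ lies below a rational function of $\rho$ that coincides, up to absorbing the numerical constants $\tfrac12,\tfrac1{2^*},\tfrac1q$, with $l_\mu$ in \eqref{funzionelmu}; hence one can select an open sub-interval $\Lambda_\mu\subseteq(0,\max l_\mu)$ and, for each $\lambda\in\Lambda_\mu$, a radius $\rho_\lambda\in(0,\rho_0]$ that makes $\mathcal E_{\lambda,\mu}>0$ on $\partial B(0,\rho_\lambda)$. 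On the other hand, hypothesis $(f_2)$ yields $F(t)\geq Mt^2$ near $0^+$ for every prescribed $M>0$; testing $\mathcal E_{\lambda,\mu}$ against a small positive constant $t_0\in H_\alpha^2(\M)$ therefore forces $\mathcal E_{\lambda,\mu}(t_0)<0$ for $t_0$ small enough, in particular with $t_0$ still inside $B(0,\rho_\lambda)$.

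Combining the two estimates yields $\inf_{B_c(0,\rho_\lambda)}\mathcal E_{\lambda,\mu}<0<\inf_{\partial B(0,\rho_\lambda)}\mathcal E_{\lambda,\mu}$, which places $w_{0,\mu,\lambda}$ in $B(0,\rho_\lambda)\setminus\{0\}$; being an interior local minimum, it is a critical point of $\mathcal E_{\lambda,\mu}$, i.e.\ a nontrivial weak solution of \eqref{problema}. I expect the main technical difficulty to be the joint calibration of $\Lambda_\mu$ and of the radii $\rho_\lambda$: the parameter $\lambda$ must be small enough that the subcritical perturbation does not overwhelm the Yamabe part on the sphere $\partial B(0,\rho_\lambda)$, while $\rho_\lambda$ must be compatible with the interior test function making $\mathcal E_{\lambda,\mu}$ negative via $(f_2)$. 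The explicit form of $l_\mu$ in \eqref{funzionelmu} encodes precisely this balance between the critical and subcritical contributions.
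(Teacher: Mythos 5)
Your overall strategy (direct minimization of $\mathcal E_{\lambda,\mu}$ on a small closed ball, using Lemma \ref{lemmasemicontinuita} together with the weak continuity of the subcritical term, then showing the minimizer is interior and nonzero) is sound and matches the paper's framework, but the mechanism by which you exclude a boundary minimizer is genuinely different. You impose that $\mathcal E_{\lambda,\mu}>0$ on the sphere $\partial B(0,\rho_\lambda)$ while $(f_2)$ forces a negative value inside, so the minimizer must be interior and nontrivial in one stroke. The paper instead never establishes positivity on the sphere: it argues by contradiction via the Ricceri-type Lemma \ref{proprietaEtilde}, estimating the incremental ratio $\varepsilon\mapsto\bigl(\sup_{B_c(0,\varrho)}\widetilde{\mathcal E}_{\lambda,\mu}-\sup_{B_c(0,\varrho-\varepsilon)}\widetilde{\mathcal E}_{\lambda,\mu}\bigr)/\varepsilon$ and producing a competitor in the open ball with strictly smaller energy than any boundary minimizer; nontriviality is then handled separately by the same $(f_2)$ computation you use. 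Your route is more elementary and self-contained (it bypasses Lemma \ref{proprietaEtilde} entirely), at the price of a lossier threshold: the rational function your sphere estimate produces is
$$
\tilde l_\mu(t)=\frac{\tfrac12 t-\tfrac{\mu}{2^*}c_{2^*}^{2^*}\|K\|_\infty t^{2^*-1}}{a_1c_{2^*}Vol_g(\M)^{\frac{2^*-1}{2^*}}+\tfrac{a_2}{q}c_{2^*}^{q}Vol_g(\M)^{\frac{2^*-q}{2^*}}t^{q-1}},
$$
which does \emph{not} coincide with $l_\mu$ in \eqref{funzionelmu} even ``up to absorbing constants'' --- the factors $\tfrac12,\tfrac1{2^*},\tfrac1q$ are precisely what the paper's derivative-type estimate removes, and that is where the exact form of $l_\mu$ comes from. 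This is not a fatal gap: since the theorem only asserts the existence of \emph{some} open interval inside $\bigl(0,\max_{[0,+\infty)}l_\mu\bigr)$, you can take $\Lambda_\mu\subseteq\bigl(0,\min\{\max l_\mu,\;\sup_{(0,\rho_0]}\tilde l_\mu\}\bigr)$, both suprema being positive; but you should state this intersection explicitly rather than identifying your threshold with $l_\mu$.
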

As already mentioned, our strategy consists in showing that the energy \eqref{funzenergia} possesses a nontrivial minimizer in $H_\alpha^2(\M)$. Yet, the presence of the critical term prevents the direct minimization from being immediately applicable. First we consider, for all $w\in H_\alpha^2(\M)$, the functionals
$$
w\mapsto\mathcal E_{\lambda,\mu}(w) + \lambda \int_{\M}F(w)d\sigma_g, \quad w\mapsto \frac{1}{2}\left\| w\right\| ^2 - \mathcal E_{\lambda,\mu}(w),
$$
and prove some of their properties.

\begin{lemma}\label{lemmasemicontinuita}
For every $\mu>0$ there exists $\varrho_{0,\mu}>0$ such that the functional
$$
\widehat{\mathcal E}_\mu(w):=\frac{1}{2}\left\|w\right\|^2 -\frac{\mu}{2^*}\int_\M K(\sigma)|w(\sigma)|^{2^*}d\sigma_g, \quad w\in H_\alpha^2(\M),
$$
is sequentially weakly lower semicontinuous in $B_c(0,\varrho_{0,\mu})$.
\end{lemma}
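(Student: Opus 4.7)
The obstruction to weak lower semicontinuity of $\widehat{\mathcal E}_\mu$ is entirely due to the critical term $\int_\M K|w|^{2^*}d\sigma_g$, since the norm contribution $\frac{1}{2}\|w\|^2$ is weakly lsc on the whole Hilbert space. The plan is therefore to exploit a Brezis–Lieb type decomposition of both terms of $\widehat{\mathcal E}_\mu$ along a weakly convergent sequence, and then to absorb the residual critical part into the residual quadratic part by restricting to a sufficiently small ball.

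Concretely, I would choose the radius $\varrho_{0,\mu}>0$ small enough that
$$
\frac{1}{2}t^2 - \frac{\mu}{2^*}\|K\|_\infty c_{2^*}^{2^*}\, t^{2^*} \geq 0 \quad\text{for every } t\in[0,2\varrho_{0,\mu}],
$$
which is possible because $2^*>2$; an explicit choice is
$$
\varrho_{0,\mu} := \frac{1}{2}\left(\frac{2^*}{2\mu\|K\|_\infty c_{2^*}^{2^*}}\right)^{1/(2^*-2)}.
$$
Given any sequence $(w_n)\subset B_c(0,\varrho_{0,\mu})$ with $w_n\rightharpoonup w$ in $H_\alpha^2(\M)$, the weak lsc of the norm gives $w\in B_c(0,\varrho_{0,\mu})$, and by \eqref{immersione} together with the compactness of the embedding into $L^q(\M)$ for $q<2^*$, up to a subsequence $w_n\to w$ strongly in $L^q(\M)$ for all such $q$ and almost everywhere on $\M$.

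At this point I would invoke the Brezis–Lieb lemma applied to the (bounded, nonnegative) sequence $(|w_n|^{2^*})$ weighted by the bounded function $K$, to obtain
$$
\int_\M K(\sigma)|w_n|^{2^*}\,d\sigma_g - \int_\M K(\sigma)|w_n-w|^{2^*}\,d\sigma_g - \int_\M K(\sigma)|w|^{2^*}\,d\sigma_g \to 0.
$$
In parallel, the Hilbert identity $\|w_n\|^2 = \|w_n-w\|^2 + \|w\|^2 + 2\langle w_n-w,w\rangle$, together with $\langle w_n-w,w\rangle\to 0$, yields $\|w_n\|^2 = \|w\|^2 + \|w_n-w\|^2 + o(1)$. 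Combining these two splittings produces
$$
\widehat{\mathcal E}_\mu(w_n) = \widehat{\mathcal E}_\mu(w) + \widehat{\mathcal E}_\mu(w_n-w) + o(1).
$$

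To finish, set $v_n := w_n - w$; then $\|v_n\|\leq 2\varrho_{0,\mu}$. Using the Sobolev embedding $\|v_n\|_{2^*}\leq c_{2^*}\|v_n\|$ and the definition of $\varrho_{0,\mu}$,
$$
\widehat{\mathcal E}_\mu(v_n) \geq \frac{1}{2}\|v_n\|^2 - \frac{\mu}{2^*}\|K\|_\infty c_{2^*}^{2^*}\|v_n\|^{2^*} \geq 0,
$$
so passing to $\liminf$ in the identity above gives $\liminf_{n}\widehat{\mathcal E}_\mu(w_n)\geq \widehat{\mathcal E}_\mu(w)$, as desired. The only real difficulty lies in verifying the Brezis–Lieb splitting for the weighted critical integral on the manifold (which reduces to the classical lemma via local charts and the boundedness of $K$); the smallness condition on $\varrho_{0,\mu}$ then does the rest by making the nonlinear residual dominated by the quadratic one.
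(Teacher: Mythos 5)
Your argument is correct and is essentially the proof in the paper: both rest on the Br\'ezis--Lieb splitting of the weighted critical integral, the Hilbert-space identity for the norms along the weakly convergent sequence, and the Sobolev inequality $\|v\|_{2^*}\leq c_{2^*}\|v\|$ to make the residual critical term dominated by the residual quadratic one on a small ball. The only (harmless) differences are that you take the radius half as large so as to use the crude bound $\|w_n-w\|\leq 2\varrho_{0,\mu}$, where the paper instead exploits $\limsup_n\|w_n-w_\infty\|\leq\varrho$, and that the Br\'ezis--Lieb lemma needs no local charts since it holds on arbitrary measure spaces.
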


\begin{proof}
Let $\mu,\varrho>0$ and let $\{w_j\}_{j\in \N}\subset B_c(0,\varrho)$ be such that $w_j\rightharpoonup w_\infty\in B_c(0,\varrho)$.  The conclusion will be achieved  by proving that
\begin{equation}\label{convergenzeFine}
\liminf_{j\rightarrow\infty}(\widehat{\mathcal E}_\mu(w_j)-\widehat{\mathcal E}_\mu(w_\infty))\geq 0.
\end{equation}

Let us observe that, for all $w_1,w_2\in H_\alpha^2(\M)$, the following basic equality holds
\begin{equation*}
\left\|w_2\right\|^2 - \left\|w_1\right\|^2 - 2\left\langle w_1, w_2-w_1\right\rangle_g = \left\|w_1-w_2\right\|^2.
\end{equation*}
Moreover, applying Br\'{e}zis-Lieb's Lemma to the sequence $\{K^{1/2^*}w_j\}_{j\in\N}\subset L^{2^*}(\M)$, one has
\begin{align}\label{GMB2}
&\quad\ \liminf_{j\rightarrow \infty}\left( \int_\M K(\sigma)|w_j(\sigma)|^{2^*}d\sigma_g - \int_\M K(\sigma)|w_\infty(\sigma)|^{2^*}d\sigma_g\right)\\
\notag &\quad=\liminf_{j\rightarrow \infty}\int_\M K(\sigma)|w_j(\sigma)-w_\infty(\sigma)|^{2^*}d\sigma_g.
\end{align}
Bearing in mind also that $w_j\rightharpoonup w_\infty$, we deduce
\begin{align*}\label{perGMB22}
&\quad\ \liminf_{j\rightarrow \infty}(\widehat{\mathcal E}_\mu(w_j)-\widehat{\mathcal E}_\mu(w_\infty))\\
\notag & = \liminf_{j\rightarrow \infty}\left( \frac{1}{2}\left(\left\|w_j\right\|^2 -\left\|w_\infty \right\|^2\right)\right.  \\
\notag &\quad \left. - \frac{\mu}{2^*}\int_{\M}K(\sigma) \left(|w_j(\sigma)|^{2^*}-|w_\infty(\sigma)|^{2^*}\right) d\sigma_g \right)\\
\notag &\geq \liminf_{j\rightarrow \infty}\left( \frac{1}{2}\left\|w_j-w_\infty\right\|^2 -\frac{\mu}{2^*}\left\| K\right\|_\infty \int_\M |w_j(\sigma)-w_\infty(\sigma)|^{2^*}d\sigma_g\right)\\
\notag & \geq \liminf_{j\rightarrow \infty} \left(\frac{1}{2}-\frac{c_{2^*}^{2^*}}{2^*}\mu\left\| K\right\|_\infty \|w_j-w_\infty\|^{2^*-2}\right)\|w_j-w_\infty\|^{2}\\
\notag & \geq \liminf_{j\rightarrow \infty}\left(\frac{1}{2} - \frac{c_{2^*}^{2^*}}{2^*}\mu\left\| K\right\|_\infty\varrho^{2^*-2}\right) \|w_j-w_\infty\|^{2}.
\end{align*}
So for
\begin{equation*}\label{rhosegnatomu}
0<\varrho\leq \bar{\varrho}_\mu:=\left(\frac{d}{(d-2) c_{2^*}^{2^*}\mu\left\| K\right\|_\infty}\right)^{\frac{d-2}{4}},
\end{equation*}
inequality \eqref{convergenzeFine} is verified and $\widehat{\mathcal E}_\mu$ is sequentially weakly lower semicontinuous in $B_c(0,\varrho_{0,\mu})$, provided that $\varrho_{0,\mu}\in (0,\bar{\varrho}_\mu)$.
\end{proof}

\begin{lemma}\label{proprietaEtilde}
Let $\lambda,\mu>0$, let $f:\R\to\R$ satisfy $(f_1)$, and let $\widetilde{\mathcal E}_{\lambda,\mu}:H_\alpha^2(\M)\to\R$ be the functional defined by
$$
\widetilde{\mathcal E}_{\lambda,\mu}(w):=\frac{\mu}{2^*}\int_\M K(\sigma) |w(\sigma)|^{2^*}d\sigma_g + \lambda \int_\M F(w)  d\sigma_g
$$
for any $w\in H_\alpha^2(\M)$. Then the following facts hold:
\begin{itemize}
\item[$(i)$] if
\begin{equation}\label{Ga1}
\limsup_{\varepsilon\rightarrow 0^+}\frac{\displaystyle\sup_{B_c(0,\varrho_0)}\widetilde{\mathcal E}_{\lambda,\mu} - \sup_{B_c(0,\varrho_0-\epsilon)}\widetilde{\mathcal E}_{\lambda,\mu}}{\varepsilon}<\varrho_0
\end{equation}
for some $\varrho_0>0$, then
\begin{equation}\label{Ga2}
\inf_{\eta<\varrho_0}\frac{\displaystyle \sup_{B_c(0,\varrho_0)}\widetilde{\mathcal E}_{\lambda,\mu} - \sup_{B_c(0,\eta)}\widetilde{\mathcal E}_{\lambda,\mu}}{\varrho_0^2-\eta^2}<\frac{1}{2};
\end{equation}
\item[$(ii)$] if \eqref{Ga2} is satisfied for some $\varrho_0>0$, then
\begin{equation}\label{VGa1}
\inf_{w\in B(0,\varrho_0)}\frac{\displaystyle \sup_{B_c(0,\varrho_0)}\widetilde{\mathcal E}_{\lambda,\mu} - \widetilde{\mathcal E}_{\lambda,\mu}(w)}{\varrho_0^2-\left\|w\right\|^2}<\frac{1}{2}.
\end{equation}
\end{itemize}
\end{lemma}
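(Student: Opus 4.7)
\smallskip

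\textbf{Proof plan for Lemma \ref{proprietaEtilde}.} The two implications are of a purely elementary nature: (i) is a bridge from a ``derivative-like'' estimate to a difference-quotient estimate with a quadratic denominator, while (ii) lifts a supremum-to-supremum estimate to a pointwise one by picking a near-maximizer. No analysis of $\widetilde{\mathcal E}_{\lambda,\mu}$ beyond monotonicity of $w\mapsto \sup_{B_c(0,\cdot)}\widetilde{\mathcal E}_{\lambda,\mu}$ will be needed.

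For part (i), the key identity to exploit is the factorization $\varrho_0^2-\eta^2=(\varrho_0-\eta)(\varrho_0+\eta)$. I would parametrize admissible radii as $\eta=\varrho_0-\varepsilon$ with $\varepsilon\in(0,\varrho_0)$, so that $\varrho_0^2-\eta^2=\varepsilon(2\varrho_0-\varepsilon)$. Then
\begin{equation*}
\frac{\sup_{B_c(0,\varrho_0)}\widetilde{\mathcal E}_{\lambda,\mu}-\sup_{B_c(0,\varrho_0-\varepsilon)}\widetilde{\mathcal E}_{\lambda,\mu}}{\varrho_0^2-(\varrho_0-\varepsilon)^2}
=\frac{1}{2\varrho_0-\varepsilon}\cdot\frac{\sup_{B_c(0,\varrho_0)}\widetilde{\mathcal E}_{\lambda,\mu}-\sup_{B_c(0,\varrho_0-\varepsilon)}\widetilde{\mathcal E}_{\lambda,\mu}}{\varepsilon}.
\end{equation*}
Taking $\limsup_{\varepsilon\to 0^+}$ and using hypothesis \eqref{Ga1} (together with $1/(2\varrho_0-\varepsilon)\to 1/(2\varrho_0)$), the right-hand side has a limit (superior) strictly less than $\frac{1}{2\varrho_0}\cdot\varrho_0=\frac{1}{2}$. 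Hence there exists $\varepsilon_0\in(0,\varrho_0)$ such that the left-hand side (with $\varepsilon=\varepsilon_0$) is already strictly below $\tfrac12$; choosing $\eta=\varrho_0-\varepsilon_0$ in the infimum in \eqref{Ga2} concludes (i).

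For part (ii), let $\eta_0\in[0,\varrho_0)$ be a radius realizing \eqref{Ga2} in a strict sense, so that
\begin{equation*}
\frac{\sup_{B_c(0,\varrho_0)}\widetilde{\mathcal E}_{\lambda,\mu}-\sup_{B_c(0,\eta_0)}\widetilde{\mathcal E}_{\lambda,\mu}}{\varrho_0^2-\eta_0^2}<\frac{1}{2}.
\end{equation*}
I would then pick a maximizing sequence $\{w_n\}\subset B_c(0,\eta_0)$ with $\widetilde{\mathcal E}_{\lambda,\mu}(w_n)\to\sup_{B_c(0,\eta_0)}\widetilde{\mathcal E}_{\lambda,\mu}$. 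Since $\|w_n\|\le\eta_0<\varrho_0$, each $w_n$ lies in the open ball $B(0,\varrho_0)$, so $\varrho_0^2-\|w_n\|^2\ge\varrho_0^2-\eta_0^2>0$ and the quotient in \eqref{VGa1} evaluated at $w_n$ is bounded above by
\begin{equation*}
\frac{\sup_{B_c(0,\varrho_0)}\widetilde{\mathcal E}_{\lambda,\mu}-\widetilde{\mathcal E}_{\lambda,\mu}(w_n)}{\varrho_0^2-\eta_0^2}.
\end{equation*}
Letting $n\to\infty$, this last fraction tends to the quantity displayed above, which is strictly less than $\tfrac12$; hence for $n$ large enough, $w_n$ provides a value of the quotient in \eqref{VGa1} strictly below $\tfrac12$, proving the infimum is $<\tfrac12$.

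The only conceptual point that needs care is keeping $w_n$ inside the \emph{open} ball $B(0,\varrho_0)$ (so that the denominator in \eqref{VGa1} is positive); this is automatic once $\eta_0<\varrho_0$ is strictly fixed at the start. Beyond that, there are no genuine obstacles: the proof is essentially a chain of inequalities, with the algebraic identity $\varrho_0^2-\eta^2=\varepsilon(2\varrho_0-\varepsilon)$ doing all the work in (i) and monotonicity of the supremum plus a maximizing sequence doing all the work in (ii).
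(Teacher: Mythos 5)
Your argument is correct. Part $(i)$ is essentially identical to the paper's: the same factorization $\varrho_0^2-(\varrho_0-\varepsilon)^2=\varepsilon(2\varrho_0-\varepsilon)$, the same passage to the limit superior, and the same choice $\eta_0=\varrho_0-\varepsilon_0$. In part $(ii)$, however, you take a genuinely different and more elementary route. The paper first rewrites \eqref{Ga2} as $\sup_{B_c(0,\eta_0)}\widetilde{\mathcal E}_{\lambda,\mu}>\sup_{B_c(0,\varrho_0)}\widetilde{\mathcal E}_{\lambda,\mu}-\tfrac12(\varrho_0^2-\eta_0^2)$ and then invokes $(f_1)$ to get sequential weak lower semicontinuity of $\widetilde{\mathcal E}_{\lambda,\mu}$, so that the supremum over the sphere $\partial B_c(0,\eta_0)$ coincides with the supremum over its weak closure, i.e.\ over the whole closed ball $B_c(0,\eta_0)$; this produces a point $w_0$ with $\|w_0\|$ \emph{exactly} equal to $\eta_0$, for which the denominator $\varrho_0^2-\|w_0\|^2$ equals $\varrho_0^2-\eta_0^2$ and the estimate follows at once. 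You instead take a plain maximizing sequence $\{w_n\}\subset B_c(0,\eta_0)$ and observe that since the numerator $\sup_{B_c(0,\varrho_0)}\widetilde{\mathcal E}_{\lambda,\mu}-\widetilde{\mathcal E}_{\lambda,\mu}(w_n)$ is nonnegative and $\varrho_0^2-\|w_n\|^2\geq\varrho_0^2-\eta_0^2$, a possibly larger denominator can only decrease the quotient. This bypasses the functional-analytic step entirely (no weak closures, no semicontinuity, no exact attainment of the supremum), at the price of the small but essential remark about the sign of the numerator — which you correctly make. Both proofs are valid; yours is shorter and uses less structure, while the paper's version hands you a point lying exactly on a sphere of prescribed radius, a piece of information that is not actually needed in the subsequent application within the proof of Theorem \ref{principale}.
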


\begin{proof}
\emph{$(i)$} From the identity
$$
\frac{\displaystyle\sup_{B_c(0,\varrho_0)}\widetilde{\mathcal E}_{\lambda,\mu} - \sup_{B_c(0,\varrho_0-\epsilon)}\widetilde{\mathcal E}_{\lambda,\mu}}{\varrho_0^2 -(\varrho_0-\varepsilon)^2} =\left(  \frac{\displaystyle\sup_{B_c(0,\varrho_0)}\widetilde{\mathcal E}_{\lambda,\mu} - \sup_{B_c(0,\varrho_0-\epsilon)}\widetilde{\mathcal E}_{\lambda,\mu}}{\varepsilon}\right) \left( \frac{1}{2\varrho_0-\varepsilon}\right),
$$
it follows that
\begin{equation}\label{Ga1GG}
\limsup_{\varepsilon\rightarrow 0^+}\frac{\displaystyle\sup_{B_c(0,\varrho_0)}\widetilde{\mathcal E}_{\lambda,\mu} - \sup_{B_c(0,\varrho_0-\epsilon)}\widetilde{\mathcal E}_{\lambda,\mu}}{\varrho_0^2 -(\varrho_0-\varepsilon)^2}<\frac{1}{2}.
\end{equation}

Now, by \eqref{Ga1GG} there exists $\bar\varepsilon_0>0$ such that
$$
\frac{\displaystyle\sup_{B_c(0,\varrho_0)}\widetilde{\mathcal E}_{\lambda,\mu} - \sup_{B_c(0,\varrho_0-\varepsilon)}\widetilde{\mathcal E}_{\lambda,\mu}}{\varrho_0^2 -(\varrho_0-\varepsilon)^2}<\frac{1}{2}
$$
for every $\varepsilon\in (0,\bar\varepsilon_0)$. Setting $\eta_0:=\varrho_0-\varepsilon_0$, with $\varepsilon_0\in (0,\bar\varepsilon_0)$, we get
$$
\frac{\displaystyle\sup_{B_c(0,\varrho_0)}\widetilde{\mathcal E}_{\lambda,\mu} - \sup_{B_c(0,\eta_0)}\widetilde{\mathcal E}_{\lambda,\mu}}{\varrho_0^2 -\eta_0^2}<\frac{1}{2}
$$
and thus the conclusion follows.

\emph{$(ii)$} Thanks to inequality \eqref{Ga2}, one has
\begin{equation}\label{Ga123}
\sup_{B_c(0,\eta_0)}\widetilde{\mathcal E}_{\lambda,\mu} > \sup_{B_c(0,\varrho_0)}\widetilde{\mathcal E}_{\lambda,\mu} -\frac{1}{2}(\varrho_0^2-\eta^2_0)
\end{equation}
for some $0<\eta_0<\varrho_0$. Invoking $(f_1)$ and standard argument, $\widetilde{\mathcal E}_{\lambda,\mu}$ turns out to be weakly lower semicontinuous in $B_c(0,\eta_0)$ and therefore
\begin{equation*}\label{Ga1234}
\sup_{\partial B_c(0,\eta_0)}\widetilde{\mathcal E}_{\lambda,\mu} = \sup_{\overline{\partial B_c(0,\eta_0)}^*}\widetilde{\mathcal E}_{\lambda,\mu} = \sup_{B_c(0,\eta_0)}\widetilde{\mathcal E}_{\lambda,\mu},
\end{equation*}
where $\overline{\partial B_c(0,\eta_0)}^*$ is the weak closure of $\partial B_c(0,\eta_0)$ in $H_\alpha^2(\M)$. Therefore by \eqref{Ga123} there exists $w_0\in H_\alpha^2(\M)$ with $\|w_0\|=\eta_0$ such that
$$
\widetilde{\mathcal E}_{\lambda,\mu}(w_0)>\sup_{B_c(0,\varrho_0)}\widetilde{\mathcal E}_{\lambda,\mu}-\frac{1}{2}(\varrho_0^2-\eta^2_0),
$$
and the second claim is proved as well.
\end{proof}

We are now in a position to prove our existence result.

\emph{Proof of Theorem \ref{principale}.} Fix $\mu>0$ and let $\varrho_{\mu,\max}>0$ be the global maximizer of $l_\mu$. Set $\varrho_{0,\mu}:=\min\{\bar{\varrho}_\mu, \varrho_{\mu,\max}\}$, $\bar{\varrho}_\mu$ being defined by \eqref{rhosegnatomu}, and $\Lambda_\mu:=(0,l_\mu(\varrho_{0,\mu}))$.

Taking $\lambda\in\Lambda_\mu$, there exists $\varrho_{0,\mu,\lambda}\in(0,\varrho_{0,\mu})$ such that
\begin{equation}\label{rangelambda}
0<\lambda<\frac{\varrho_{0,\mu,\lambda}-\mu c_{2^*}^{2^*}\left\| K\right\|_\infty \varrho_{0,\mu,\lambda}^{2^*-1}}{a_1c_{2^*}Vol_g(\M)^\frac{2^*-1}{2^*} + a_2c_{2^*}^q Vol_g(\M)^\frac{2^*-q}{2^*}\varrho_{0,\mu,\lambda}^{q-1}}.	
\end{equation}

Since $\varrho_{0,\mu,\lambda}<\bar{\varrho}_\mu$, by Lemma \ref{lemmasemicontinuita} the functional $\mathcal E_{\lambda,\mu}$ is sequentially weakly lower semicontinuous in $B_c(0,\varrho_{0,\mu,\lambda})$ and so there exists $w_{0,\mu,\lambda}\in\linebreak B_c(0,\varrho_{0,\mu,\lambda})$ such that
$$
\mathcal E_{\lambda,\mu}(w_{0,\mu,\lambda})=\min_{B_c(0,\varrho_{0,\mu,\lambda})}\mathcal E_{\lambda,\mu}.
$$

Suppose by contradiction that $\|w_{0,\mu,\lambda}\|=\varrho_{0,\mu,\lambda}$. Fix $\varepsilon\in(0,\varrho_{0,\mu,\lambda})$ and define
$$
\varphi_{\lambda,\mu}(\varepsilon,\varrho_{0,\mu,\lambda}):=\frac{\displaystyle \sup_{B_c(0,\varrho_{0,\mu,\lambda})}\widetilde{\mathcal E}_{\lambda,\mu} - \sup_{B_c(0,\varrho_{0,\mu,\lambda}-\varepsilon)}\widetilde{\mathcal E}_{\lambda,\mu}}{\varepsilon}.
$$

With the aid of $(f_1)$ we get
\begin{align*}
&\quad\ \varphi_{\lambda,\mu}(\varepsilon,\varrho_{0,\mu,\lambda})\\
&\leq\frac{1}{\varepsilon}\sup_{w\in B_c(0,1)}\int_\M\left|\int_{(\varrho_{0,\mu,\lambda}-\varepsilon)w(\sigma)}^{\varrho_{0,\mu,\lambda} w(\sigma)}\left(\mu \left\| K\right\|_\infty |t|^{2^*-1} +\lambda |f(t)|\right) dt\right|d\sigma_g\\
&\leq \frac{1}{\varepsilon}\sup_{w\in B_c(0,1)}\int_\M\left|\int_{(\varrho_{0,\mu,\lambda}-\varepsilon)w(\sigma)}^{\varrho_{0,\mu,\lambda} w(\sigma)}\left(\mu \left\| K\right\|_\infty |t|^{2^*-1} + a_1\lambda + a_2\lambda |t|^{q-1}\right) dt\right|d\sigma_g\\
& \leq \frac{c_{2^*}^{2^*}\mu \left\| K\right\|_\infty }{2^*}\left(\frac{\varrho_{0,\mu,\lambda}^{2^*}-(\varrho_{0,\mu,\lambda}-\varepsilon)^{2^*}}{\varepsilon}\right) + a_1\lambda c_{2^*}Vol_g(\M)^\frac{2^*-1}{2^*} \\
&\quad + a_2 \lambda \frac{c_{2^*}^q}{q}Vol_g(\M)^\frac{2^*-q}{2^*}\left(\frac{\varrho_{0,\mu,\lambda}^q-(\varrho_{0,\mu,\lambda}-\varepsilon)^q}{\varepsilon}\right)
\end{align*}
and taking the limsup for $\varepsilon\rightarrow 0^+$ we get
\begin{align}\label{VGa1Glu1}
\limsup_{\varepsilon\rightarrow 0^+}\varphi_{\lambda,\mu}(\varepsilon,\varrho_{0,\mu,\lambda}) &\leq c_{2^*}^{2^*}\mu \left\| K\right\|_\infty \varrho_{0,\mu,\lambda}^{2^*-1} + \lambda a_1 c_{2^*}Vol_g(\M)^\frac{2^*-1}{2^*} \\
\notag &\quad + \lambda a_2 c_{2^*}^q Vol_g(\M)^\frac{2^*-q}{2^*}\varrho_{0,\mu,\lambda}^{q-1},
\end{align}
which, due to \eqref{rangelambda}, forces
$$
\limsup_{\varepsilon\rightarrow 0^+}\varphi_{\lambda,\mu}(\varepsilon,\varrho_{0,\mu,\lambda})<\varrho_{0,\mu,\lambda}.
$$
Therefore, invoking Lemma \ref{proprietaEtilde}, one has
\begin{equation*}\label{VGa1Glu}
\inf_{w\in B(0,\varrho_{0,\mu,\lambda})}\frac{\displaystyle \sup_{B_c(0,\varrho_{0,\mu,\lambda})}\widetilde{\mathcal E}_{\lambda,\mu} - \widetilde{\mathcal E}_{\lambda,\mu}(w)}{\varrho_{0,\mu,\lambda}^2-\left\|w\right\|^2}<\frac{1}{2}
\end{equation*}
and there exists $\bar w_{\mu,\lambda}\in B(0,\varrho_{0,\mu,\lambda})$ such that, for every $w\in B_c(0,\varrho_{0,\mu,\lambda})$,
$$
\widetilde{\mathcal E}_{\lambda,\mu}(w)\leq \sup_{B_c(0,\varrho_{0,\mu,\lambda})}\widetilde{\mathcal E}_{\lambda,\mu} < \widetilde{\mathcal E}_{\lambda,\mu}(\bar w_{\mu,\lambda}) + \frac{1}{2}(\varrho_{0,\mu,\lambda}^2-\|\bar w_{\mu,\lambda}\|^2),
$$
 which we can rewrite as
\begin{equation}\label{John}
\mathcal E_{\lambda,\mu}(\bar w_{\mu,\lambda}):=\frac{1}{2}\|\bar w_{\mu,\lambda}\|^2 - \widetilde{\mathcal E}_{\lambda,\mu}(\bar w_{\mu,\lambda})<\frac{\varrho_{0,\mu,\lambda}^2}{2}-\widetilde{\mathcal E}_{\lambda,\mu}(w).
\end{equation}
Evaluating the previous inequality at $w=w_0$, we deduce
\begin{equation*}
\mathcal E_{\lambda,\mu}(\bar w_{\mu,\lambda})< \frac{1}{2}\varrho_{0,\mu,\lambda}^2 - \widetilde{\mathcal E}_{\lambda,\mu}(w_{0,\mu,\lambda}) = \mathcal E_{\lambda,\mu}(w_{0,\mu,\lambda}),
\end{equation*}
against the minimality of $w_{0,\mu,\lambda}$. In conclusion, $w_{0,\mu,\lambda} \in B(0,\varrho_{0,\mu,\lambda})$ and is therefore a local minimum for $\mathcal E_{\lambda,\mu}$ and a solution to \eqref{problema}.

The final task is now to show that $w_{0,\mu,\lambda}$ is not identically $0$ on $\M$. To this end, fix a constant $a\in(0,+\infty)$. Thanks to $(f_2)$, for all $c>0$ there exists $\delta_c>0$ such that
$$
F(t)\geq c t^2 \quad \mbox{for any } t\in(0,\delta_c).
$$
So if $t\in(0,\delta_c/a)$, we obtain
\begin{align*}
\mathcal E_{\lambda,\mu}(ta) & = \frac{1}{2}a^2 Vol_g(\M) t^2 -\frac{\mu}{2^*}a^{2^*}\left\| K\right\|_1 t^{2^*} - \lambda\int_\M F(ta)d\sigma_g\\
& \leq  \left( \frac{1}{2} - \lambda c \right) a^2 Vol_g(\M) t^2 -\frac{\mu}{2^*}a^{2^*}\left\| K\right\|_1 t^{2^*}\\
&<0
\end{align*}
for big enough $c>0$. As a result, $0$ is not a local minimizer of $\mathcal E_{\lambda,\mu}$ and $w_{0,\mu,\lambda}$ is not the null function.
\qed

\begin{remark}
\rm{It is worth noticing that assumption $(f_2)$ comes into play only at the end of the proof of Theorem \ref{principale}, to prevent $0$ from being a local minimum point of the energy. Therefore it can be replaced by any other assumption, compatible with $(f_1)$, which ensures this fact. For instance, if $f(0)\neq 0$, then it is easy to verify that $0$ is not a solution to \eqref{problema} and so $w_{0,\lambda,\mu}$ is nonzero. Another hypothesis, more restrictive than $(f_2)$, which serves our purpose is
$$
\liminf_{t\to 0^+}\frac{F(t)}{t^b}>0, \quad \text{for some } b\in(1,2).
$$
}
\end{remark}

The next result shows that, by lightly strengthening the assumptions on $f$, one can determine the sign of the local minimizer.

\begin{theorem}\label{corollarypm0}
Let $f:\R\to\R$ satisfy $(f_1)$, $(f_2)$ and $f(0)=0$. Then for every $\mu>0$ and sufficiently small $\lambda$, problem \eqref{problema} admits a nonnegative solution $w^\star_{0,\mu,\lambda}$ in $H_\alpha^2(\M)$.
\end{theorem}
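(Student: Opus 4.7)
The plan is to apply Theorem \ref{principale} with $f$ replaced by its one-sided truncation and then deduce the nonnegativity of the resulting local minimizer via an energy-comparison argument between $w$ and its positive part $w^+$.

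To begin, set
$$
\tilde f(t):=\begin{cases} f(t) & \text{if } t\geq 0,\\ 0 & \text{if } t<0.\end{cases}
$$
Since $f(0)=0$, the function $\tilde f$ is continuous and locally Lipschitz across the origin; moreover $(f_1)$ passes to $\tilde f$ with the same constants $a_1,a_2,q$ (via $|\tilde f(t)|\leq|f(t)|$) and $(f_2)$ is unaffected, since it only sees $t\to 0^+$. Hence Theorem \ref{principale} applied to the auxiliary problem obtained by replacing $f$ with $\tilde f$ yields, for every $\mu>0$ and every $\lambda$ in the associated interval $\Lambda_\mu$, a nontrivial weak solution $w^\star:=w^\star_{0,\mu,\lambda}\in B(0,\varrho_{0,\mu,\lambda})$ minimizing on $B_c(0,\varrho_{0,\mu,\lambda})$ the functional
$$
\mathcal E^{\tilde f}_{\lambda,\mu}(w):=\frac{1}{2}\|w\|^2-\frac{\mu}{2^*}\int_\M K(\sigma)|w|^{2^*}\,d\sigma_g-\lambda\int_\M \tilde F(w)\,d\sigma_g,
$$
where $\tilde F(t):=\int_0^t\tilde f(\tau)\,d\tau$, with $\varrho_{0,\mu,\lambda}<\bar\varrho_\mu$ as in Lemma \ref{lemmasemicontinuita}.

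The decisive step is to show $w^\star\geq 0$. For any $w\in B_c(0,\varrho_{0,\mu,\lambda})$, decompose $w=w^+-w^-$ with $w^\pm:=\max\{\pm w,0\}$; since $w^+$ and $w^-$ have essentially disjoint supports one obtains the orthogonal splittings $\|w\|^2=\|w^+\|^2+\|w^-\|^2$ and $|w|^{2^*}=(w^+)^{2^*}+(w^-)^{2^*}$, while $\tilde F(w)=F(w^+)$ because $\tilde F\equiv 0$ on $(-\infty,0]$. Putting these together,
$$
\mathcal E^{\tilde f}_{\lambda,\mu}(w)-\mathcal E^{\tilde f}_{\lambda,\mu}(w^+)=\frac{1}{2}\|w^-\|^2-\frac{\mu}{2^*}\int_\M K(\sigma)(w^-)^{2^*}\,d\sigma_g\geq \|w^-\|^2\left(\frac{1}{2}-\frac{\mu\|K\|_\infty c_{2^*}^{2^*}}{2^*}\|w^-\|^{2^*-2}\right).
$$
Because $\|w^-\|\leq\|w\|\leq\varrho_{0,\mu,\lambda}<\bar\varrho_\mu$ and $\bar\varrho_\mu$ was calibrated in Lemma \ref{lemmasemicontinuita} precisely so that $(\mu\|K\|_\infty c_{2^*}^{2^*}/2^*)\bar\varrho_\mu^{2^*-2}=1/2$, the parenthesis is strictly positive whenever $w^-\not\equiv 0$.

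Specialising this comparison to $w=w^\star$ and noting that $\|w^{\star+}\|\leq\|w^\star\|<\varrho_{0,\mu,\lambda}$, so that $w^{\star+}$ still belongs to $B_c(0,\varrho_{0,\mu,\lambda})$, the minimality of $w^\star$ forces $w^{\star-}\equiv 0$. Consequently $w^\star\geq 0$, which in turn gives $\tilde f(w^\star)=f(w^\star)$ and $|w^\star|^{4/(d-2)}w^\star=(w^\star)^{(d+2)/(d-2)}$, so $w^\star$ is a nonnegative weak solution of the original problem \eqref{problema}. The main obstacle here is conceptual: one must choose a truncation that simultaneously fits the framework of Theorem \ref{principale} --- this is exactly where the hypothesis $f(0)=0$ becomes essential, in order to keep $\tilde f$ locally Lipschitz across the origin --- and produces a strict energy penalty for any negative portion of $w$. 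Fortunately, the critical radius $\bar\varrho_\mu$ already singled out in Lemma \ref{lemmasemicontinuita} supplies precisely the margin needed to upgrade the strict inequality $\varrho_{0,\mu,\lambda}<\bar\varrho_\mu$ into a strict energy comparison.
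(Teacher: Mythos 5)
Your proof is correct, but the nonnegativity step is genuinely different from the paper's. The paper truncates \emph{both} the subcritical nonlinearity \emph{and} the critical term (replacing $|w|^{2^*}$ by $w_+^{2^*}$ in the functional \eqref{Funzionale+}), obtains a critical point of the truncated energy, and then tests the Euler--Lagrange equation with $(w^\star_{0,\mu,\lambda})_-$: every term on the right-hand side vanishes by disjointness of supports, giving $\|(w^\star_{0,\mu,\lambda})_-\|=0$ directly. That argument works for \emph{any} critical point and makes no use of the radius $\bar\varrho_\mu$, but it requires re-justifying that the whole machinery of Theorem \ref{principale} (in particular Lemma \ref{lemmasemicontinuita}) survives the replacement of $|w|^{2^*}$ by $w_+^{2^*}$ --- a point the paper dispatches with ``it is a simple matter to check.'' You instead truncate only $f$, so Theorem \ref{principale} applies verbatim, and you recover nonnegativity from the \emph{minimality} of $w^\star$ together with the exact calibration $(\mu\|K\|_\infty c_{2^*}^{2^*}/2^*)\,\bar\varrho_\mu^{\,2^*-2}=1/2$, which makes the energy of $w^\star$ strictly exceed that of $(w^\star)^+$ whenever $(w^\star)^-\not\equiv 0$; your computations (the orthogonal splitting of the norm, $\tilde F(w)=F(w^+)$, and the strict sign of the parenthesis inside the ball) all check out. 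The only caveat is that you are using information extracted from the \emph{proof} of Theorem \ref{principale} (that the solution is a minimizer over the closed ball $B_c(0,\varrho_{0,\mu,\lambda})$ with $\varrho_{0,\mu,\lambda}<\bar\varrho_\mu$) rather than from its statement alone; this is legitimate but worth flagging explicitly, since your argument would not apply to a critical point that is not a local minimizer, whereas the paper's does.
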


\begin{proof}
Associated with the function
$$
F_+(t):=\int_0^t  f_+(\tau)d\tau,
$$
for every $t\in\R$, where
$$
f_+(\tau):=	
\begin{cases}
f(\tau) & \mbox{ if } \tau>0\\
0 & \mbox{ if } \tau\leq 0,
\end{cases}
$$
let us introduce the functional $\mathcal E_{\lambda,\mu}^{+}: H_\alpha^2(\M) \to \R$ given by
\begin{equation}\label{Funzionale+}
\mathcal E_{\lambda,\mu}^{+}(w) :=\frac 1 2 \left\| w\right\|^2
	-\frac{\mu}{2^*}\int_{\mathcal M} K(\sigma)w_{+}(\sigma)^{2^*} d\sigma_g -\lambda\int_{\mathcal M} F_+(w(\sigma))d\sigma_g,
\end{equation}
where $u_+:=\max\{u,0 \}$, for any $w\in H_\alpha^2(\M)$.\par
 It is a simple matter to check that $\mathcal E_{\lambda,\mu}^{+}$ is well-defined and G\^{a}teaux-differentiable on $H_\alpha^2(\M)$, so by Theorem \ref{principale}, for every $\mu>0$ and $\lambda$ sufficiently small, it admits a critical point $w^\star_{0,\mu,\lambda}\in H_\alpha^2(\M)$.	

We claim that $w^\star_{0,\mu,\lambda}$ is nonnegative on $\M$. Indeed, since $w^\star_{0,\mu,\lambda}\!\in\!
H_\alpha^2(\M)$, it follows that $(w^\star_{0,\mu,\lambda})_-:=\max\{-w^\star_{0,\mu,\lambda}, 0\}$ belongs to $H_\alpha^2(\M)$ as
well. So, taking also into account the relationship
	\begin{align*}
	\left\langle w^\star_{0,\mu,\lambda},(w^\star_{0,\mu,\lambda})_-\right\rangle & = \int_{\M}\left\langle \nabla
w^\star_{0,\mu,\lambda}(\sigma),\nabla (w^\star_{0,\mu,\lambda})_-(\sigma) \right\rangle_g  d\sigma_g\\
	&\quad  +  \int_{\M} \alpha(\sigma)\left\langle w^\star_{0,\mu,\lambda}(\sigma),
(w^\star_{0,\mu,\lambda})_-(\sigma)\right\rangle_g d\sigma_g \\
	& = - \int_{\M} \left( |\nabla (w^\star_{0,\mu,\lambda})_- (\sigma)|^2 +
\alpha(\sigma)(w^\star_{0,\mu,\lambda})_-(\sigma)^2\right)  d\sigma_g,
	\end{align*}
	 we get

\begin{align*}
	- \left\|(w^\star_{0,\mu,\lambda})_-\right\|^2& = \left\langle
w^\star_{0,\mu,\lambda},(w^\star_{0,\mu,\lambda})_-\right\rangle\\
	& = \mu\int_{\mathcal M} K(\sigma) (w^\star_{0,\mu,\lambda})_{+}(\sigma)^{2^*-1}(w^\star_{0,\mu,\lambda})_{-}(\sigma)
d\sigma_g\\
&\quad +\lambda\int_{\mathcal M} f_+(w^\star_{0,\mu,\lambda}(\sigma))(w^\star_{0,\mu,\lambda})_-(\sigma) d\sigma_g\\
	& = \lambda\int_{\mathcal M} f_+(w^\star_{0,\mu,\lambda}(\sigma))(w^\star_{0,\mu,\lambda})_-(\sigma) d\sigma_g\\
	& = 0.
   \end{align*}
As a result, $\|(w^\star_{0,\mu,\lambda})_-\|=0$ and hence $w^\star_{0,\mu,\lambda}\geq 0$ a.e. on $\mathcal M$.
\end{proof}

\begin{remark}
\rm{In the same way, if $f:\R\to\R$ satisfies $(f_1)$,
		\begin{itemize}
			\item[$(f_2')$ ] $\displaystyle\liminf_{t\to 0^-}\frac{F(t)}{t^2} =+\infty$,
			\end{itemize}
and $f(0)=0$, then, arguing as in Theorem \ref{corollarypm0}, one can study the existence of a nonpositive solution to \eqref{problema}. It suffices to consider the functional
\begin{equation}\label{Funzionale-}
\mathcal E_{\lambda,\mu}^{-}(w) :=\frac 1 2 \left\| w\right\|^2
-\frac{\mu}{2^*}\int_{\mathcal M} K(\sigma)w_-(\sigma)^{2^*} d\sigma_g -\lambda\int_{\mathcal M} F_-(w(\sigma))d\sigma_g,
\end{equation}
where
	$$
	F_-(t):=\int_0^t  f_-(\tau)d\tau,
	$$
for every $t\in\R$, and
	$$
	f_-(\tau):=	
	\begin{cases}
	f(\tau) & \mbox{ if } \tau<0\\
	0 & \mbox{ if } \tau\geq 0.
	\end{cases}
	$$
}
\end{remark}

The underlying idea of the proof of Theorem \ref{principale} remains valid when adding a term singular at zero, i.e. to treat the following singular variant of problem \eqref{problema}:
\begin{equation}\tag{$P_{\lambda,\mu}^\star$}\label{problemasingolare}
\begin{cases}
	-\Delta_g w + \alpha(\sigma)w  = \mu K(\sigma) w^\frac{d+2}{d-2} +\lambda \left( w^{r-1} + f(w)\right),  \quad \sigma\in\M \\
w\in H^2_\alpha(\M), \quad  w>0 \mbox{ in } \mathcal M,
\end{cases}
\end{equation}
where $r\in(0,1)$ and $f:[0,+\infty)\to[0,+\infty)$ is continuous and subcritical. In this context a weak solution to \eqref{problemasingolare} is meant to be any $w\in H_\alpha^2(\M)$ such that $w>0$ a.e. in $\mathcal M$, $w^{r-1}z\in L^1(\mathcal M)$ for any $z\in H_\alpha^2(\M)$ and
$$
\left\langle w,z\right\rangle - \mu\int_\mathcal M K(\sigma) w^\frac{d+2}{d-2}z d\sigma_g - \lambda\int_\mathcal M \left( w^{r-1} + f(w)\right) z d\sigma_g =0
$$
for each $z\in H_\alpha^2(\M)$. The energy naturally associated with \eqref{problemasingolare} is
\begin{align}\label{energiasingolare}
	\mathcal E_{\lambda,\mu}(w)&:=\frac{1}{2}\left\|w\right\|^2 -\frac{\mu}{2^*}\int_\mathcal M K(\sigma)(w^+)^{2^*}d\sigma_g\\
	\notag &\quad -\frac{\lambda}{r}\int_\mathcal M (w^+)^r d\sigma_g -\lambda\int_\mathcal M F(w^+) d\sigma_g,
\end{align}
for all $w\in H_\alpha^2(\M)$. As before, for any positive $\mu$, the range of $\lambda$ for which \eqref{problemasingolare} admits nontrivial solutions is strictly related to the maximum of an auxiliary rational function similar to \eqref{funzionelmu}.

\begin{theorem}\label{princsingolare}
	Let $f:[0,+\infty)\to[0,+\infty)$ be a continuous function for which
	\begin{itemize}
		\item[$(f_1')$] there exist $a_1,a_2\geq 0$ and $q\in[1,2^*)$ such that
		$$
		f(t)\leq a_1 + a_2 t^{q-1} \quad  \mbox{ for all } t\geq 0.
		$$
	\end{itemize}
	For any $\mu>0$ let $m_\mu:[0,+\infty)\to\R$ be the function defined by
	$$
	m_\mu (t):=\frac{t^{2-r}-\mu c_{2^*}^{2^*}\left\| K\right\|_\infty t^{2^*-r}}{ c_{2^*}^r Vol_g(\M)^\frac{2^*-r}{2^*} \!+\! a_1 c_{2^*}Vol_g(\M)^\frac{2^*-1}{2^*} t^{1-r} \!+\! a_2 c_{2^*}^q Vol_g(\M)^\frac{2^*-q}{2^*} t^{q-r}},
	$$
for avery  $t\geq 0$.\par
	Then for every $\mu>0$ there exists an open interval
	$$
	\Lambda _\mu \subseteq \left(0,\max_{[0,+\infty)}m_\mu\right)
	$$
	such that, for every $\lambda\in\Lambda_\mu$, \eqref{problemasingolare} admits a nontrivial weak solution $\tilde{w}_{0,\lambda,\mu}\in H_\alpha^2(\M)$.
\end{theorem}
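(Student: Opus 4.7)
The plan is to adapt the variational scheme developed for Theorem \ref{principale} to the energy $\mathcal E_{\lambda,\mu}$ given in \eqref{energiasingolare}, exhibiting a nontrivial local minimizer in a suitable small ball of $H_\alpha^2(\M)$. Fix $\mu>0$ and let $\varrho_{\mu,\max}>0$ be a global maximizer of $m_\mu$; define $\varrho_{0,\mu}:=\min\{\bar\varrho_\mu,\varrho_{\mu,\max}\}$, with $\bar\varrho_\mu$ as in the proof of Lemma \ref{lemmasemicontinuita}, and set $\Lambda_\mu:=(0,m_\mu(\varrho_{0,\mu}))$. For any $\lambda\in\Lambda_\mu$, select $\varrho_{0,\mu,\lambda}\in(0,\varrho_{0,\mu})$ with $\lambda<m_\mu(\varrho_{0,\mu,\lambda})$. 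Rearranging $m_\mu$ (dividing numerator and denominator by $\varrho_{0,\mu,\lambda}^{1-r}$) yields the singular analogue of \eqref{rangelambda}, namely
$$
\lambda\bigl[c_{2^*}^r Vol_g(\M)^{\frac{2^*-r}{2^*}}\varrho_{0,\mu,\lambda}^{r-1}+a_1c_{2^*}Vol_g(\M)^{\frac{2^*-1}{2^*}}+a_2c_{2^*}^q Vol_g(\M)^{\frac{2^*-q}{2^*}}\varrho_{0,\mu,\lambda}^{q-1}\bigr]<\varrho_{0,\mu,\lambda}-\mu c_{2^*}^{2^*}\|K\|_\infty \varrho_{0,\mu,\lambda}^{2^*-1}.
$$

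Next, I would check that $\mathcal E_{\lambda,\mu}$ is sequentially weakly lower semicontinuous on $B_c(0,\varrho_{0,\mu,\lambda})$. The critical piece $\widehat{\mathcal E}_\mu$ is covered by Lemma \ref{lemmasemicontinuita} since $\varrho_{0,\mu,\lambda}<\bar\varrho_\mu$, while the additional terms $\frac{\lambda}{r}\int_{\M}(w^+)^r d\sigma_g$ and $\lambda\int_\M F(w^+)d\sigma_g$ are in fact sequentially weakly continuous: given $w_j\rightharpoonup w_\infty$ in $H^2_\alpha(\M)$, the compact embedding \eqref{immersione} for $q\in[1,2^*)$ supplies a.e.\ and $L^q$ convergence of a subsequence, and Vitali's convergence theorem (exploiting $r\in(0,1)$ and $(f_1')$) delivers the continuity of the two integrals. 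Consequently $\mathcal E_{\lambda,\mu}$ attains its minimum on the weakly compact ball $B_c(0,\varrho_{0,\mu,\lambda})$ at some $\tilde w_{0,\lambda,\mu}$.

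To show $\tilde w_{0,\lambda,\mu}$ is an interior minimizer, I would argue by contradiction exactly as in Theorem \ref{principale}, the bound on $\varphi_{\lambda,\mu}(\varepsilon,\varrho_{0,\mu,\lambda})$ acquiring the supplementary singular contribution
$$
\frac{\lambda c_{2^*}^r Vol_g(\M)^{\frac{2^*-r}{2^*}}}{r}\cdot\frac{\varrho_{0,\mu,\lambda}^r-(\varrho_{0,\mu,\lambda}-\varepsilon)^r}{\varepsilon}.
$$
Passing to $\limsup_{\varepsilon\to 0^+}$ and invoking the inequality displayed above forces $\limsup_{\varepsilon\to 0^+}\varphi_{\lambda,\mu}(\varepsilon,\varrho_{0,\mu,\lambda})<\varrho_{0,\mu,\lambda}$, so Lemma \ref{proprietaEtilde} (whose proof transfers verbatim to the new $\widetilde{\mathcal E}_{\lambda,\mu}$) produces an interior point with strictly smaller energy, contradicting minimality. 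For nontriviality, I would evaluate $\mathcal E_{\lambda,\mu}$ at constants $t>0$: since $r<2$, the term $-\frac{\lambda}{r}Vol_g(\M)t^r$ dominates as $t\to 0^+$, so $\mathcal E_{\lambda,\mu}(t)<0$ for small $t>0$, ruling out $\tilde w_{0,\lambda,\mu}\equiv 0$. Nonnegativity comes from testing with $\tilde w_{0,\lambda,\mu}+t\tilde w_{0,\lambda,\mu}^-$ for $t\in[0,1]$: this keeps the positive part unchanged, has norm $\leq\|\tilde w_{0,\lambda,\mu}\|$, and the resulting inequality $-\tfrac{1}{2}(2t-t^2)\|\tilde w_{0,\lambda,\mu}^-\|^2\geq 0$ forces $\tilde w_{0,\lambda,\mu}^-=0$ a.e.

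The main obstacle of the proof is deriving the weak-solution identity together with the strict positivity of $\tilde w_{0,\lambda,\mu}$, both obscured by the singular reaction $w^{r-1}$. For nonnegative test directions $z\in H^2_\alpha(\M)$, concavity of $t\mapsto(\tilde w_{0,\lambda,\mu}+tz)^r$ makes the difference quotients of $\frac{\lambda}{r}\int_\M(\tilde w_{0,\lambda,\mu}+tz)_+^r d\sigma_g$ monotonic in $t>0$ small; Fatou's lemma, combined with the local minimality of $\tilde w_{0,\lambda,\mu}$, delivers a one-sided variational inequality in which the term $\lambda\int_\M\tilde w_{0,\lambda,\mu}^{r-1}z\,d\sigma_g$ is automatically integrable. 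The associated differential inequality realizes $\tilde w_{0,\lambda,\mu}$ as a nontrivial nonnegative supersolution of a sublinear equation with nonnegative source, so the strong maximum principle on the compact manifold $\M$ yields $\tilde w_{0,\lambda,\mu}>0$ everywhere; reversing the sign of $z$ converts the inequality into the desired equality, thus identifying $\tilde w_{0,\lambda,\mu}$ as a weak solution of \eqref{problemasingolare} in the sense stated, in the spirit of \cite{farfar2015a}.
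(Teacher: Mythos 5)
Your proposal follows the paper's proof essentially step for step: the paper likewise observes that the critical part of the energy \eqref{energiasingolare} satisfies Lemma \ref{lemmasemicontinuita} and that the reaction part satisfies Lemma \ref{proprietaEtilde}, repeats the interior-minimizer contradiction of Theorem \ref{principale} with the extra $t^r$ contribution encoded in $m_\mu$, and proves nontriviality by showing that $\mathcal E_{\lambda,\mu}(tw)<0$ for small $t>0$ along a ray with $w>0$, where the term $-\frac{\lambda}{r}\int_\M |w|^r d\sigma_g\, t^r$ dominates (your evaluation at constants is the same computation); your explicit rearrangement of $\lambda<m_\mu(\varrho_{0,\mu,\lambda})$ and your $\tilde w_{0,\lambda,\mu}+t\,\tilde w_{0,\lambda,\mu}^-$ argument for nonnegativity are correct and in fact more detailed than the paper. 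The only substantive point to flag is the last step: the paper delegates the passage from ``local minimizer'' to ``weak solution of the singular equation'' entirely to \cite[Theorem~4.1]{farfar2015a}, whereas you sketch it. Your sketch has the right ingredients (monotone difference quotients for $z\geq 0$, Fatou, a.e.\ positivity and the automatic integrability of $w^{r-1}z$), but the closing phrase ``reversing the sign of $z$ converts the inequality into the desired equality'' is not accurate as stated: the one-sided variational inequality is obtained only for nonnegative directions, and for $z\leq 0$ the concavity/Fatou mechanism runs the wrong way and is neither monotone nor dominated. The standard device is instead to insert the admissible test function $(\tilde w_{0,\lambda,\mu}+\varepsilon\varphi)^+$ for arbitrary $\varphi\in H^2_\alpha(\M)$ into the one-sided inequality and to show that the contribution of the set $\{\tilde w_{0,\lambda,\mu}+\varepsilon\varphi<0\}$ is $o(\varepsilon)$; alternatively one must first upgrade $\tilde w_{0,\lambda,\mu}>0$ a.e.\ to a uniform lower bound via regularity before differentiating in arbitrary directions. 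This is a fixable imprecision in exactly the step the paper itself leaves to the reference, not a flaw in the overall strategy.
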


\begin{proof}
	It is clear that for all $\lambda,\mu>0$ the functionals
	\begin{align*}
		H_\alpha^2(\M)\ni w &\mapsto\frac{1}{2}\left\|w\right\|^2 -\frac{\mu}{2^*}\int_\M K(\sigma)(w^+)^{2^*} d\sigma_g,\\
			H_\alpha^2(\M)\ni w &\mapsto \frac{\mu}{2^*}\int_\M K(\sigma) (w^+)^{2^*} d\sigma_g +\frac{\lambda}{r}\int_\M (w^+)^r d\sigma_g +\lambda\int_\M F(w^+) d\sigma_g
	\end{align*}
	satisfy	Lemmas \ref{lemmasemicontinuita} and \ref{proprietaEtilde}, respectively.

	Arguing exactly as in Theorem \ref{principale}, we deduce that the functional $\mathcal E_{\lambda,\mu}$ defined by \eqref{energiasingolare}, attains a minimum $\tilde{w}_{0,\lambda,\mu}$ on a sufficiently small ball $$B_c(0,\varrho_{0,\mu,\lambda})\subset H_\alpha^2(\M)$$ and such a minimum is not identically zero. Indeed, fixing $w\in H_\alpha^2(\M)$, $w>0$ on $\M$, if $t>0$ one has
	\begin{align*}
	\mathcal E_{\lambda,\mu}(tw) &\leq \frac{1}{2}\left\| w\right\|^2 t^2 - \frac{\mu}{2^*}\essinf_\M K \left\| w\right\|_{2^*}^{2^*}t^{2^*} +\lambda a_1\left\|w\right\|_{1} t \\
	& \quad +\frac{\lambda a_2}{q}\left\| w\right\|_{q}^q t^q -\frac{\lambda}{r}\int_\M |w|^r d\sigma_g \; t^r
	\end{align*}
	and hence $\mathcal E_{\lambda,\mu}(tw)$ is negative for all small enough $t$. Arguments similar to those of  \cite[Theorem~4.1]{farfar2015a} finally show that $\tilde{w}_{0,\lambda,\mu}$ weakly solves \eqref{problemasingolare}.
   \end{proof}
   
\section{Existence of MP-type solutions}\label{mountainpass}
In this section we establish another existence result for \eqref{problema}, this time of the mountain pass type. The abstract tool we rely upon is a version of the mountain pass theorem without the Palais-Smale condition (\cite[Theorem~2.2]{brenir1983positive}) and, as usual in the treatment of critical problems via this approach, the crucial point is the relationship between the mountain pass level and the constant $S$ defined by \eqref{costanteS}.

\begin{theorem}\label{secondoteorema}
Let $f:\R\to\R$ be a continuous function satisfying $(f_1)$ and
\begin{itemize}
\item[$(f_3)$] $\displaystyle\lim_{t\to 0}\frac{f(t)}{t}=0$,
\end{itemize}
and let $\lambda,\mu>0$.
Furthermore, assume that there exists $w^\star\in H_\alpha^2(\M)\setminus\{0\}$, $w^\star\geq 0$ a.e. in $\M$, such that
\begin{itemize}
\item[$(H_1)$] $\displaystyle\sup_{\tau\geq 0}\mathcal E_{\lambda,\mu}(\tau w^\star)<\frac{S^\frac{d}{2}}{d(\mu\left\| K\right\|_\infty)^{\frac{d-2}{2}}}$.	
\end{itemize}
Then problem \eqref{problema} admits a nontrivial solution.	
\end{theorem}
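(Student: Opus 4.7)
\emph{Proof proposal.} My plan is to apply the classical mountain pass scheme tailored to critical elliptic problems, in the spirit of Brezis-Nirenberg, in four steps.

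\textbf{Step 1 (Mountain pass geometry).} I would first combine $(f_3)$ with $(f_1)$ to produce, for each $\varepsilon>0$, a constant $C_\varepsilon>0$ and an exponent $s\in(2,2^*)$ such that $|F(t)|\le \varepsilon t^2+C_\varepsilon|t|^s$ for every $t\in\R$. Feeding this into the continuous embeddings \eqref{immersione} yields, for $\varepsilon$ suitably small,
$$
\mathcal E_{\lambda,\mu}(w)\ge \tfrac14\|w\|^2 - C_1\|w\|^{2^*}-C_2\|w\|^s,
$$
so that $\inf_{\partial B(0,\rho)}\mathcal E_{\lambda,\mu}\ge \alpha_0>0$ for a small enough radius $\rho>0$. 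On the other hand, $K\in\Lambda_+(\M)$ and $w^\star\not\equiv 0$ force $\int_\M K(\sigma)(w^\star)^{2^*}d\sigma_g>0$, so the critical term dominates along the ray $\tau\mapsto \tau w^\star$ and one can pick $\tau_1>0$ with $\|\tau_1 w^\star\|>\rho$ and $\mathcal E_{\lambda,\mu}(\tau_1 w^\star)\le 0$.

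\textbf{Step 2 (Upper estimate on the MP level).} Invoking \cite[Theorem~2.2]{brenir1983positive}, I would extract a Palais-Smale sequence $(w_n)\subset H_\alpha^2(\M)$ at the mountain pass level
$$
c:=\inf_{\gamma\in\Gamma_0}\max_{t\in[0,1]}\mathcal E_{\lambda,\mu}(\gamma(t)),\qquad \Gamma_0:=\{\gamma\in C([0,1],H_\alpha^2(\M)):\ \gamma(0)=0,\ \gamma(1)=\tau_1 w^\star\}.
$$
Evaluating on the segment $\gamma_0(t)=t\tau_1 w^\star$ and using $(H_1)$ produces the crucial chain
$$
0<\alpha_0\le c\le \sup_{\tau\ge 0}\mathcal E_{\lambda,\mu}(\tau w^\star)<\frac{S^{d/2}}{d(\mu\|K\|_\infty)^{(d-2)/2}}.
$$

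\textbf{Step 3 (Compactness of $(w_n)$).} Combining $\mathcal E_{\lambda,\mu}(w_n)\to c$ with $\langle\mathcal E_{\lambda,\mu}'(w_n),w_n\rangle=o(\|w_n\|)$ and controlling the subcritical remainder
$$
\int_\M\Bigl(\tfrac1{2^*}f(w_n)w_n-F(w_n)\Bigr)d\sigma_g
$$
via $(f_1)$ and the embeddings \eqref{immersione}, I would show $\sup_n\|w_n\|<+\infty$. Up to a subsequence, $w_n\rightharpoonup w^\infty$ weakly in $H_\alpha^2(\M)$ and $w_n\to w^\infty$ strongly in every $L^q(\M)$ with $q\in[1,2^*)$ by Rellich-Kondrachov. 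Passing to the limit in $\langle\mathcal E_{\lambda,\mu}'(w_n),z\rangle\to 0$, using the weak continuity of the critical Nemytskii operator $w\mapsto |w|^{4/(d-2)}w$ from $L^{2^*}$ to its dual, together with the continuity of $w\mapsto f(w)$ in the subcritical topology, then shows that $w^\infty$ is a weak (hence classical) solution of \eqref{problema}.

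\textbf{Step 4 (Nontriviality -- the main obstacle).} The delicate point, which is exactly what $(H_1)$ is designed to handle, is excluding $w^\infty\equiv 0$. If this were to happen, the compactness in Step 3 would give $\int_\M F(w_n)d\sigma_g\to 0$ and $\int_\M f(w_n)w_n\,d\sigma_g\to 0$, so that
$$
\|w_n\|^2=\mu\int_\M K(\sigma)|w_n|^{2^*}d\sigma_g+o(1),\qquad \frac{\|w_n\|^2}{d}=\mathcal E_{\lambda,\mu}(w_n)+o(1)\to c.
$$
The definition \eqref{costanteS} of $S$ together with the sandwich
$$
S\|w_n\|_{2^*}^2\le\|w_n\|^2\le \mu\|K\|_\infty\|w_n\|_{2^*}^{2^*}+o(1)
$$
would then force $\lim\|w_n\|^2\ge S^{d/2}/(\mu\|K\|_\infty)^{(d-2)/2}$ and hence $c\ge S^{d/2}/(d(\mu\|K\|_\infty)^{(d-2)/2})$, contradicting the strict bound from Step 2. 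Consequently $w^\infty$ is a nontrivial solution of \eqref{problema}.
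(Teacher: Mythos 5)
Your proposal is correct and follows essentially the same route as the paper: the same mountain pass geometry via the $(f_1)$--$(f_3)$ growth estimate on $F$, the same use of \cite[Theorem~2.2]{brenir1983positive} to extract a bounded Palais--Smale sequence at a level $c$ squeezed between $\beta>0$ and the threshold $S^{d/2}/\bigl(d(\mu\|K\|_\infty)^{(d-2)/2}\bigr)$ by $(H_1)$, and the same contradiction argument (via \eqref{costanteS} and $c=L/d$) to rule out a trivial weak limit. The only cosmetic differences are your use of a subcritical exponent $s\in(2,2^*)$ in the bound on $F$ where the paper uses $2^*$ directly, and the paper's normalization of $w^\star$ along the ray; neither affects the argument.
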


\begin{proof}
Let us show first that $\mathcal E_{\lambda,\mu}$ possesses the geometry required by Theorem  2.2 of \cite{brenir1983positive}.

\begin{claim}\label{claimone}
There exist $\varrho,\beta>0$ such that for any $w\in \partial B(0,\varrho)$ one has $\mathcal E_{\lambda,\mu}(w)\geq\beta$.
\end{claim}

Fix $\varepsilon\in \left( 0,\lambda^{-1} c_2^{-2}\right) $. Thanks to $(f_3)$, there exists $\delta_\epsilon>0$ so that
\begin{equation}
|f(t)| \leq \varepsilon |t|
\end{equation}
for every $t\in(-\delta_\varepsilon, \delta_\varepsilon)$. On the other hand, $(f_1)$ allows us to deduce that
$$
\lim_{|t|\to +\infty}\frac{|f(t)|}{|t|^{2^*-1}}=0
$$
and therefore there exists $a_\varepsilon>0$ such that
\begin{equation}\label{maggiorazioneg}
|f(t)|\leq \varepsilon |t|^{2^*-1} + a_\varepsilon
\end{equation}
for all $t\in\R$ and, consequently,
\begin{equation}
|f(t)| \leq \varepsilon |t|^{2^*-1} + a_\varepsilon\frac{|t|^{2^*-1}}{\delta_\varepsilon^{2^*-1}} = b_\varepsilon |t|^{2^*-1}
\end{equation}
for every $t\in\R\setminus(-\delta_\varepsilon,\delta_\varepsilon)$. Hence we get
\begin{equation}
|F(t)|\leq \frac{\varepsilon}{2} t^2 + \frac{b_\varepsilon}{2^*}|t|^{2^*}
\end{equation}
for every $t\in\R$. In the light of the above estimates, for every $w\in H_\alpha^2(\M)$ we obtain
\begin{align*}
\mathcal E_{\lambda,\mu}(w) & \geq \frac{1}{2}\left\| w\right\|^2 -\frac{\mu}{2^*} \left\| K\right\|_\infty \left\| w\right\|_{2^*}^{2^*}  -\frac{\lambda\varepsilon}{2}\left\| w\right\|_2^2 -\frac{\lambda b_\varepsilon}{2^*}  \left\| w\right\|_{2^*}^{2^*} \\
& \geq \left(\frac{1-\lambda\varepsilon c_2^2}{2} \right)\left\| w\right\|^2 -\left(\frac{\mu\left\| K\right\|_\infty+\lambda b_\varepsilon}{2^*}\right)c_{2^*}^{2^*}\left\| w\right\|^{2^*}.
   \end{align*}
Thanks to the choice of $\varepsilon$, for suitable $a,b>0$ we have
$$
\mathcal E_{\lambda,\mu}(w) \geq a \left\| w\right\|^2 \left(1-b\left\|w\right\|^\frac{4}{d-2}  \right)
$$
and so, by choosing $\varrho\in \left( 0, b^{\frac{2-d}{4}} \right) $, we get
$$
\inf_{\partial B(0,\varrho)} \mathcal E \geq a \varrho^2 \left(1-b\varrho^\frac{4}{d-2}\right)=:\beta>0
$$
and the conclusion is achieved.

\begin{claim}
There exists $w_1\in H_\alpha^2(\M)$ such that $w_1\geq 0$ a.e. in $\M$, $\|w_1\|>\varrho$ and $\mathcal E_{\lambda,\mu}(w_1)<\beta$, where $\varrho,\beta$ have the same meaning as in Claim \ref{claimone}.	
\end{claim}

If $\tau>0$ and $w^\star$ is the function defined in $(H_1)$, we get
\begin{align*}
\mathcal E_{\lambda,\mu}\left( \frac{\tau w^\star}{\left\| w^\star\right\| }\right)  & = \frac{1}{2}\tau^2 \!-\!\frac{\mu}{2^*\left\|w^\star\right\|^{2^*}}\int_\M K(\sigma)|\tau w^\star|^{2^*} d\sigma_g \!-\!\lambda \int_\M F\left(  \frac{\tau w^\star}{\left\| w^\star\right\|}\right)  d\sigma_g\\
& \leq \frac{1}{2}\tau^2 - \frac{\mu}{2^*\left\| w^\star\right\| ^{2^*}}\essinf_\M K \left\| w^\star\right\|_{2^*}^{2^*} \tau^{2^*}\\
&\quad + \lambda a_1 \frac{\left\| w^\star\right\|_1}{\left\| w^\star\right\|} \tau + \frac{\lambda a_2}{q}\cdot\frac{\left\| w^\star\right\|_q^q}{\left\| w^\star\right\|^q} \tau^q
\end{align*}
and therefore $\mathcal E_{\lambda,\mu}\left(\tau w^\star/\left\| w^\star\right\|\right)\to -\infty$ as $\tau\to +\infty$. So it suffices to pick $w_1:= \tau_0 w^\star/\left\| w^\star\right\|$, with large enough $\tau_0>0$, to obtain the claim.

\bigskip

Now, set
\begin{equation}\label{quotaMP}
c:=\inf_{\gamma\in \Gamma}\sup_{t\in[0,1]}\mathcal E_{\lambda,\mu}(\gamma(t)),
\end{equation}
where
$$
\Gamma:=\{\gamma\in C^0([0,1], H_\alpha^2(\M)) : \gamma(0)=0 \mbox{ and } \gamma(1)=w_1\}.
$$
For any $\gamma\in\Gamma$, the function $t\mapsto \left\| \gamma(t)\right\| $ is continuous on $[0,1]$, so by the intermediate value theorem there exists $\bar t\in(0,1)$ such that $\left\| \gamma(\bar t)\right\| =\varrho$. As a result
$$
\sup_{t\in[0,1]}\mathcal E_{\lambda,\mu}(\gamma(t)) \geq \mathcal E_{\lambda,\mu}(\gamma(\bar t)) \geq \inf_{\partial B(0,\varrho)} \mathcal E_{\lambda,\mu}
$$
which implies $c\geq\beta$. Moreover, thanks to $(H_1)$ and the fact that the map $t\mapsto  w_1 t$, $t\in[0,1]$, is an element of $\Gamma$, one has the estimate
\begin{equation}\label{relazionec}
c  \leq \sup_{\tau\geq 0}\mathcal E_{\lambda,\mu}(\tau w_1)<\frac{S^\frac{d}{2}}{d(\mu\left\| K\right\|_\infty)^{\frac{d-2}{2}}}.
\end{equation}

Next, due to Theorem 2.2 of \cite{brenir1983positive}, there exists a sequence $\{w_j\}_{j\in\N}\subset H_\alpha^2(\M)$ satisfying
\begin{equation}\label{quasipuntocritico}
\mathcal E_{\lambda,\mu}(w_j)\to c, \quad \mathcal E'_{\lambda,\mu}(w_j)\to 0, \quad \mbox{as } j\to\infty.
\end{equation}

\begin{claim}
	The sequence $\{w_j\}_{j\in\N}$ is bounded in $H_\alpha^2(\M)$.
\end{claim}

For large enough $j$, using also \eqref{maggiorazioneg} with $\varepsilon\in\left(0,\mu\essinf_\M K/\lambda(d-1) \right)$, we have
\begin{align*}
c + 1 + \left\| w_j\right\| & \geq \mathcal E_{\lambda,\mu}(w_j) - \frac{1}{2}\mathcal E'_{\lambda,\mu}(w_j)(w_j) \\
& = \frac{\mu}{d} \int_\M K(\sigma)|w_j|^{2^*}d\sigma_g -\lambda \int_\M F(w_j) d\sigma_g + \frac{\lambda}{2}\int_\M f(w_j) w_j d\sigma_g\\
& \geq \frac{\mu}{d}\essinf_\M K \left\| w_j\right\|_{2^*}^{2^*} -\left(\frac{\lambda\varepsilon}{2^*}+\frac{\lambda\varepsilon}{2} \right) \left\| w_j\right\|_{2^*}^{2^*} -\frac{3}{2}\lambda a_\varepsilon \left\|  w_j\right\|_1\\
& \geq \left(\frac{2\mu \essinf_\M K -\lambda\varepsilon(d-2)-\lambda\varepsilon d}{2d} \right) \left\| w_j\right\|_{2^*}^{2^*} - \frac{3}{2}\lambda a_\varepsilon c_1 \left\|  w_j\right\|,
\end{align*}
and therefore,  for some $\kappa_1>0$,
\begin{equation}\label{kappa1}
\left\| w_j\right\|_{2^*}^{2^*} \leq \kappa_1(1+\left\| w_j\right\| ).
\end{equation}
On the other hand, again for large values of $j$, taking \eqref{kappa1} into account one has
\begin{align*}
	c+1 &\geq \mathcal E_{\lambda,\mu}(w_j) \geq \frac{1}{2} \left\| w_j\right\|^2 - \left( \frac{\mu}{2^*}\left\| K\right\|_\infty + \frac{\lambda\varepsilon}{2^*}\right)  \left\| w_j\right\|_{2^*}^{2^*} -\lambda a_\varepsilon c_1 \left\| w_j\right\| \\
& \geq \frac{1}{2} \left\| w_j\right\|^2 - \left(\frac{\mu\left\| K\right\|_\infty + \lambda\varepsilon (d-2) }{2d} \right)\kappa_1(1+\left\| w_j\right\| ) - \lambda a_\varepsilon c_1 \left\| w_j\right\|,
\end{align*}
which forces
$$
\left\| w_j\right\|^2 \leq \kappa_2(1+\left\| w_j\right\| )
$$
for some $\kappa_2>0$, and hence the boundedness of $\{w_j\}_{j\in\N}$.	

\medskip

As a consequence of the previous claim, there exists $w_\infty \in H_\alpha^2(\M)$ such that, up to a subsequence (still denoted by $\{w_j\}_{j\in\N}$), $w_j \rightharpoonup w_\infty$, that is
\begin{equation}\label{convergenze0}
\left\langle w_j,z\right\rangle  \to \left\langle w_\infty,z\right\rangle \quad \mbox{as } j\to\infty
\end{equation}
for any $z\in H_\alpha^2(\M)$. Moreover, since $\{w_j\}_{j\in\N}$ is bounded in $L^{2^*}(\M)$ as well, passing to a further subsequence, we get the validity of the following convergences:
\begin{equation}\label{convergenze}
\begin{split}
& w_j \rightharpoonup w_\infty \quad \mbox{ in } L^{2^*}(\M)\\
& w_j \to w_\infty \quad \mbox{ in } L^p(\M), \quad p\in[1,2^*) \\
& w_j \to w_\infty \quad \mbox{ a.e. in } \M\\
& |w_j|^\frac{4}{d-2}w_j \rightharpoonup  |w_\infty|^\frac{4}{d-2}w_\infty \quad \mbox{ in } L^\frac{2d}{d+2}(\M),
\end{split}
\end{equation}
as $j\to\infty$. Moreover, thanks to \eqref{maggiorazioneg} and \eqref{convergenze}, we easily obtain
\begin{align*}\label{convfae}
& f(w_j(\cdot)) \to  f(w_\infty(\cdot)) \quad \mbox{ a.e. in } \M\\
& f(w_j(\cdot)) \rightharpoonup  f(w_\infty(\cdot)) \quad \mbox{ in }  L^\frac{2d}{d+2}(\M),
\end{align*}
as $j\to\infty$. As a result,
$$
\int_\M f(w_j)z d\sigma_g \to  \int_\M f(w_\infty)z d\sigma_g \quad \mbox{as } j\to\infty
$$
for any $z\in L^{2^*}(\M)$ and, a fortiori,
\begin{equation}\label{convffi1}
\int_\M f(w_j)z d\sigma_g \to  \int_\M f(w_\infty)z d\sigma_g \quad \mbox{as } j\to\infty
\end{equation}
for any $z\in H_\alpha^2(\M)$. Now, recalling that
$$
\mathcal E'_{\lambda,\mu}(w_j)(z) = \left\langle w_j,z\right\rangle  - \mu\int_\M K(\sigma) |w_j|^\frac{4}{d-2}w_j z d\sigma_g - \lambda \int_\M f(w_j) z d\sigma_g,
$$
for any $z\in H_\alpha^2$, passing to the limit as $j\to\infty$ in the above equality and taking  \eqref{quasipuntocritico}, \eqref{convergenze0}, \eqref{convergenze} and \eqref{convffi1} and the fact that $K\in\Lambda_+(\M)$ into account, we get
$$
\left\langle w_\infty,z\right\rangle - \mu\int_\M K(\sigma)|w_\infty|^\frac{4}{d-2}w_\infty z d\sigma_g - \lambda \int_\M f(w_\infty) z d\sigma_g = 0
$$
for any $z \in H_\alpha^2(\M)$, so $w_\infty$ is the weak solution to \eqref{problema} we were looking for.

Arguing by contradiction, finally suppose that $w_\infty \equiv 0$ in $\M$. Then, due to \eqref{maggiorazioneg}, for any $j\in\N$ and for some $\kappa>0$ we would obtain
$$
\left| \int_\M f(w_j) w_j d\sigma_g\right|  \leq \varepsilon\left\| w_j\right\| _{2^*}^{2^*} + a_\varepsilon \left\| w_j\right\| _1 \leq \varepsilon \kappa + a_\varepsilon \left\| w_j\right\|_1,
$$
and
$$
\left| \int_\M F(w_j) d\sigma_g\right|   \leq \frac{\varepsilon}{2^*}\left\| w_j\right\|_{2^*}^{2^*} + a_\varepsilon \left\| w_j\right\|_1 \leq \frac{\varepsilon \kappa}{2^*} + a_\varepsilon \left\| w_j\right\|_1.
$$
Exploiting \eqref{convergenze} and taking $\limsup$ as $j\to\infty$ and $\lim$ as $\varepsilon\to 0$ in the above inequalities one has
\begin{equation}\label{fujlim}
\lim_{j\to\infty}\int_\M f(w_j) w_j d\sigma_g = \lim_{j\to\infty}\int_\M F(w_j) d\sigma_g = 0
\end{equation}
and therefore, since $\mathcal E'_{\lambda,\mu}(w_j)( w_j) \to 0$ as $j\to\infty$, 
\begin{equation}\label{nonso}
\left\| w_j\right\|^2  - \mu\int_\M K(\sigma) |w_j|^{2^*} d\sigma_g \to 0 \quad \mbox{as } j\to\infty.
\end{equation}
Now, the boundedness of $\{\|w_j\|\}_{j\in\N}$ in $\R$ implies that, up to a subsequence, there exists $L\in[0,+\infty)$ such that
\begin{equation}\label{L1}
\|w_j\|^2 \to L
\end{equation}
and so
\begin{equation}\label{L2}
\int_\M K(\sigma)|w_j|^{2^*} d\sigma_g\to \frac{L}{\mu}
\end{equation}
as $j\to +\infty$. Recalling that
\begin{align*}
	\mathcal E_{\lambda,\mu}(w_j)&=\frac 1 2\left\| w_j\right\|^2 -\frac{\mu}{2^*} \int_\M K(\sigma)|w_j|^{2^*} d\sigma_g\\
	&\quad -
\lambda\int_\M F(w_j)d\sigma_g \to c \quad \mbox{as } j\to\infty,
\end{align*}
it follows on account of \eqref{fujlim}, \eqref{L1} and \eqref{L2}, that
\begin{equation}\label{c}
c=\left( \frac 1 2 -\frac{1}{2^*}\right) L=\frac{L}{d},
\end{equation}
which due to $c\geq \beta>0$, forces $L>0$. On the other hand, from \eqref{L2} we get
\begin{equation}\label{nuova}
\lim_{j\to\infty} \int_\M |w_j|^{2^*} d\sigma_g \geq \frac{L}{\mu \left\| K\right\|_\infty}
\end{equation}
and moreover
$$
\left\| w_j\right\|^2 \geq S \|w_j\|_{2^*}^2
$$
so that, passing to the limit as $j\to\infty$ in the above relationship and taking  \eqref{L1} and \eqref{nuova} into account, we get
$$
L\geq S \left(\frac{L}{\mu\left\| K\right\|_\infty}\right)^\frac{2}{2^*},
$$
which combined with \eqref{c}, gives
$$
c\geq \frac{S^\frac{2^*}{2^*-2}}{d (\mu\left\| K\right\| _\infty)^{\frac{2}{2^*-2}}} = \frac{S^\frac{d}{2}}{d (\mu\left\| K\right\| _\infty)^{\frac{d-2}{2}}}.
$$
This contradicts \eqref{relazionec} and therefore $w_\infty\not\equiv 0$ in $\M$. The proof is hence complete.
\end{proof}

\section{The case of the sphere: applications to the Emden-Fowler equation}\label{SectionEF}
As already explained in the introduction, an interesting case of problem \eqref{problema} is
\begin{align}\tag{$\widetilde P_{\lambda,\mu}$}\label{problematilde}
	&\quad -\Delta_h w + s(1-s-d)w \\
	\notag &\qquad=\mu K(\sigma)|w|^\frac{4}{d-2}w + \lambda f(w), \quad \sigma\in\Sp^d, \; w\in H^2_1(\Sp^d)
\end{align}
where $\Sp^d$ is the unit sphere in $\R^{d+1}$, $h$ is the standard metric induced by the embedding $\Sp^d\hookrightarrow\R^{d+1}$, $s\in\R$ is a constant related to $d$ by the relationship $1-d<s<0$, and $\Delta_h$ is the Laplace-Beltrami operator on $(\Sp^d,h)$.

Existence results for \eqref{problematilde}, via a suitable change of coordinates, produce the existence of solutions to the following Emden-Fowler equation
\begin{align}\tag{$\overline P_{\lambda,\mu}$}\label{emdenfowler}
	-\Delta u  &=\mu |x|^\frac{2(2-2s-d)}{d-2}K\left(\frac{x}{|x|}\right)|u|^\frac{4}{d-2} u\\ \notag &\quad + \lambda |x|^{s-2} f\left(\frac{u}{|x|^s}\right) , \quad x\in \R^{d+1}\setminus\{0\}.
\end{align}
Indeed, let us seek solutions to \eqref{emdenfowler} of the form
\begin{equation}\label{coordsfer}
u(x)=r^s w(\sigma),
\end{equation}
where $(r,\sigma):=(|x|,x/|x|)\in(0,+\infty)\times\Sp^d$ are the spherical coordinates in $\R^{d+1}\setminus\{0\}$ and $w$ a smooth function defined on $\Sp^d$. Via \eqref{coordsfer} and taking into account that
\begin{align*}
\Delta u&=r^{-d}\frac{\partial}{\partial r}
\left(r^{d}\frac{\partial}{\partial r}(r^sw)\right)+r^{s-2}\Delta_{h}w\\
&=\big[s(d+s-1)+\Delta_{h}w\big]r^{s-2},
\end{align*}
it is easily seen that the study of problem \eqref{problematilde} can be successfully applied to treat problem \eqref{emdenfowler}.

\begin{theorem}
Let $d,s\in\R$ be such that $1-d<s<0$, $K\in\Lambda_+(\Sp^d)$ and $f:\R\to\R$ be a locally Lipschitz continuous function satisfying $(f_1)-(f_2)$. Furthermore, for any $\mu>0$, let $l_\mu:[0,+\infty)\to\R$ be the function defined by
\begin{equation}\label{funzionehmu}
	l_\mu(t):=\frac{t-\mu c_{2^*}^{2^*}\left\| K\right\|_\infty  t^{2^*-1}}{a_1 c_{2^*}\omega_d^\frac{2^*-1}{2^*} + a_2 c_{2^*}^q \omega_d^\frac{2^*-q}{2^*} t^{q-1}} \quad  \mbox{ for all } t\geq 0.
\end{equation}
	Then for every $\mu>0$ there exists an open interval
	$$
	\Lambda _\mu \subseteq \left(0,\max_{[0,+\infty)}l_\mu\right)
	$$
	such that for every $\lambda\in\Lambda_\mu$, the problem
	\begin{align}\tag{$\overline P_{\lambda,\mu}$}\label{emdenfowler}
		-\Delta u  &=\mu |x|^\frac{2(2-2s-d)}{d-2}K\left(\frac{x}{|x|}\right)|u|^\frac{4}{d-2} u \\ \notag &\quad + \lambda |x|^{s-2} f\left(\frac{u}{|x|^s}\right) , \quad x\in \R^{d+1}\setminus\{0\}
	\end{align} admits a nontrivial solution.
	\end{theorem}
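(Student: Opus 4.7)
The plan is to reduce the Emden-Fowler problem \eqref{emdenfowler} to the sphere problem \eqref{problematilde} via the change of variables $u(x)=r^s w(\sigma)$ announced in the paper, and then apply Theorem \ref{principale} to $(\M,g)=(\Sp^d,h)$ with the constant potential $\alpha(\sigma)\equiv s(1-s-d)$. Note that since $1-d<s<0$, one has $s(1-s-d)>0$, so $\alpha\in\Lambda_+(\Sp^d)$ and the variational setting of Section \ref{preliminaries} applies. Moreover, with this specific choice of $\alpha$ and $\M=\Sp^d$, the function $l_\mu$ defined in \eqref{funzionehmu} coincides with the one in \eqref{funzionelmu} (since $Vol_h(\Sp^d)=\omega_d$), so the selection of $\Lambda_\mu$ will be consistent between the two theorems.

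The first step is to apply Theorem \ref{principale} on the compact Riemannian manifold $(\Sp^d,h)$, with $\alpha(\sigma)\equiv s(1-s-d)$ and the given $K,f$. Since $f$ satisfies $(f_1)$ and $(f_2)$, Theorem \ref{principale} yields an open interval $\Lambda_\mu\subseteq(0,\max_{[0,+\infty)}l_\mu)$ such that for every $\lambda\in\Lambda_\mu$ there exists a nontrivial weak solution $w\in H^2_1(\Sp^d)$ of \eqref{problematilde}. This is the solution on the sphere, which by standard regularity (since $K$ and $f$ are regular enough) will be smooth on $\Sp^d$.

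Next, I would define the associated function on $\R^{d+1}\setminus\{0\}$ by
\begin{equation*}
u(x):=|x|^s\, w\!\left(\frac{x}{|x|}\right),\qquad x\in\R^{d+1}\setminus\{0\}.
\end{equation*}
Using the identity recalled just before the statement, namely
\begin{equation*}
\Delta u=\bigl[s(d+s-1)w+\Delta_h w\bigr]\,r^{s-2},
\end{equation*}
I would substitute the equation satisfied by $w$ on $\Sp^d$, that is $-\Delta_h w+s(1-s-d)w=\mu K(\sigma)|w|^{4/(d-2)}w+\lambda f(w)$. A direct computation then gives, for $x\in\R^{d+1}\setminus\{0\}$,
\begin{equation*}
-\Delta u=r^{s-2}\bigl[\mu K(\sigma)|w(\sigma)|^{4/(d-2)}w(\sigma)+\lambda f(w(\sigma))\bigr],
\end{equation*}
and rewriting $w(\sigma)=u(x)/|x|^s$ and collecting the powers of $r=|x|$ (the exponent of $r$ in the critical term becomes $s-2+s\cdot\tfrac{4}{d-2}+s=\tfrac{2(2-2s-d)}{d-2}$), one recovers exactly \eqref{emdenfowler}. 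The function $u$ is nontrivial because $w$ is, which establishes the desired existence.

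The only mildly technical point, which I expect to be routine rather than a genuine obstacle, is the bookkeeping of the exponent of $|x|$ in the critical term, to match the factor $|x|^{2(2-2s-d)/(d-2)}$ appearing in \eqref{emdenfowler}; this is a one-line calculation of exponents. A more subtle issue is the notion of solution: the theorem on the sphere produces $w\in H^2_1(\Sp^d)$, so $u$ inherits a natural Sobolev regularity away from the origin, and a standard bootstrap argument based on the local Lipschitz continuity of $f$ and on $K\in L^\infty(\Sp^d)$ promotes $w$ (and hence $u$) to a classical solution on its natural domain. No new compactness argument or variational principle is required here beyond what has already been established in Theorems \ref{principale} and its proof.
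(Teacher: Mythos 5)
Your proposal is correct and follows essentially the same route as the paper: apply Theorem \ref{principale} on $(\Sp^d,h)$ with the constant potential $\alpha(\sigma)\equiv s(1-s-d)>0$ and transfer the resulting nontrivial solution $w$ to $u(x)=|x|^s w(x/|x|)$ via the radial Laplacian identity. The only blemish is a sign slip in your parenthetical exponent bookkeeping (it should read $s-2-s\cdot\tfrac{4}{d-2}-s$), but the final exponent $\tfrac{2(2-2s-d)}{d-2}$ you state is the correct one.
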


\begin{proof}
Let us appeal to Theorem \ref{principale} by choosing $(\M,g)=(\Sp^d,h)$ and $\alpha(\sigma):=s(1-s-d)$ for every $\sigma\in\Sp^d$. Thanks to the relationship between $d$ and $s$, clearly $\alpha\in \Lambda_+(\Sp^d)$. So the problem
\begin{align}
	&\quad -\Delta_h w + s(1-s-d)w \\
	\notag &\qquad=\mu K(\sigma)|w|^\frac{4}{d-2}w + \lambda f(w), \quad \sigma\in\Sp^d, \; w\in H^2_1(\Sp^d)
\end{align}
admits at least a nontrivial solution $w_{0,\mu,\lambda}\in H_\alpha^2(\Sp^d)$.\par
 But, due to \eqref{coordsfer}, $u(x)=|x|^s w_{0,\mu,\lambda}(x/|x|)$ is a nontrivial solution to \eqref{emdenfowler} and the proof is completed.
\end{proof}

In the same manner, we can obtain the following 

\begin{theorem}
	Let $d,s\in\R$ be such that $1-d<s<0$, $K\in\Lambda_+(\Sp^d)$ and $f:\R\to\R$ be a locally Lipschitz continuous function satisfying $(f_1)-(f_2)$ and $f(0)=0$. Then for every $\mu>0$ and sufficiently small $\lambda$, problem \eqref{emdenfowler} admits a nonnegative solution.
\end{theorem}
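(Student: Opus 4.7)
The plan is to reduce the problem on $\R^{d+1}\setminus\{0\}$ to a problem on the sphere $\Sp^d$ via the spherical ansatz already introduced in the preceding theorem, and then to invoke Theorem \ref{corollarypm0} in place of Theorem \ref{principale}.

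More precisely, I would first choose $(\M,g)=(\Sp^d,h)$ and set the potential $\alpha(\sigma):=s(1-s-d)$ for every $\sigma\in\Sp^d$. Since $1-d<s<0$, one has $1-s-d<0$ together with $s<0$, so $\alpha$ is a strictly positive constant, and therefore $\alpha\in\Lambda_+(\Sp^d)$. With this choice, the associated problem on the sphere becomes exactly
\begin{equation*}
-\Delta_{h}w + s(1-s-d)w = \mu K(\sigma)|w|^{\frac{4}{d-2}}w+\lambda f(w), \quad \sigma\in\Sp^d,\ w\in H^{2}_{1}(\Sp^d),
\end{equation*}
which is problem \eqref{problematilde}, a particular case of \eqref{problema}. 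Since $f$ is locally Lipschitz continuous, satisfies $(f_1)$, $(f_2)$ and $f(0)=0$, Theorem \ref{corollarypm0} applies in this setting and yields, for every $\mu>0$ and every $\lambda$ sufficiently small, a nonnegative, nontrivial solution $w^{\star}_{0,\mu,\lambda}\in H^{2}_{\alpha}(\Sp^d)$.

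The second step is to perform the change of variables already exploited in the previous theorem, namely
\begin{equation*}
u(x):=|x|^{s}\,w^{\star}_{0,\mu,\lambda}\!\left(\frac{x}{|x|}\right),\qquad x\in\R^{d+1}\setminus\{0\}.
\end{equation*}
Using the identity
\begin{equation*}
\Delta u=\bigl[s(d+s-1)w+\Delta_{h}w\bigr]\,r^{s-2}
\end{equation*}
recalled just before the preceding theorem, one checks, exactly as in the proof of that theorem, that $u$ weakly solves \eqref{emdenfowler}. The crucial observation is that nonnegativity is preserved by the transformation: since $|x|^{s}>0$ for all $x\neq 0$ and $w^{\star}_{0,\mu,\lambda}\geq 0$ a.e. on $\Sp^d$, the function $u$ is nonnegative a.e. on $\R^{d+1}\setminus\{0\}$; moreover $u\not\equiv 0$ because $w^{\star}_{0,\mu,\lambda}\not\equiv 0$.

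I do not expect any serious obstacle here, since all the real work has already been done: the analytical core is Theorem \ref{corollarypm0}, and the geometric part is the routine passage from $(\Sp^d,h)$ to $\R^{d+1}\setminus\{0\}$ via spherical coordinates, whose details are identical to those in the preceding theorem. The only point deserving a line of comment is the sign of the constant potential $\alpha=s(1-s-d)$, which must be verified to lie in $\Lambda_+(\Sp^d)$ in order to legitimately apply the framework of Section \ref{directminimization}; this is immediate from the assumption $1-d<s<0$.
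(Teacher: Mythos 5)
Your proposal is correct and follows exactly the route the paper intends: the paper proves this theorem ``in the same manner'' as the preceding one, i.e.\ by taking $(\M,g)=(\Sp^d,h)$ with the constant potential $\alpha=s(1-s-d)>0$ (positive precisely because $1-d<s<0$), invoking Theorem \ref{corollarypm0} instead of Theorem \ref{principale} to get a nonnegative solution on the sphere, and transporting it to $\R^{d+1}\setminus\{0\}$ via $u(x)=|x|^s w(x/|x|)$, which preserves nonnegativity since $|x|^s>0$. No gaps.
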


and

\begin{theorem}\label{Th11}
Let $d,s\in\R$ be such that $1-d<s<0$, $K\in\Lambda_+(\Sp^d)$.
Furthermore, let $r\in(0,1)$ and $f:[0,+\infty)\to[0,+\infty)$ be a locally Lipschitz continuous function for which
\begin{itemize}
	\item[$(f_1')$] there exist $a_1,a_2\geq 0$ and $q\in[1,2^*)$ such that
	$$
	f(t)\leq a_1 + a_2 t^{q-1} \quad  \mbox{ for all } t\geq 0.
	$$
\end{itemize}
For any $\mu>0$ let $m_\mu:[0,+\infty)\to\R$ be the function defined by
$$
m_\mu (t):=\frac{t^{2-r}-\mu c_{2^*}^{2^*}\left\| K\right\|_\infty t^{2^*-r}}{ c_{2^*}^r \omega_d^\frac{2^*-r}{2^*} + a_1 c_{2^*}\omega_d^\frac{2^*-1}{2^*} t^{1-r} + a_2 c_{2^*}^q \omega_d^\frac{2^*-q}{2^*} t^{q-r}} \quad  \mbox{ for all } t\geq 0.
$$
Then for every $\mu>0$ there exists an open interval
$$
\Lambda _\mu \subseteq \left(0,\max_{[0,+\infty)}m_\mu\right)
$$
such that, for every $\lambda\in\Lambda_\mu$, the problem
\begin{equation*}
\begin{cases}
	-\Delta_h w + s(1-s-d)w  = \mu K(\sigma) w^\frac{d+2}{d-2} +\lambda \left( w^{r-1} + f(w)\right),  \quad \sigma\in \Sp^d \\[1ex]
w\in H^2_1(\Sp^d), \quad  w>0 \mbox{ in } \Sp^d,
\end{cases}
\end{equation*}
admits a solution.
\end{theorem}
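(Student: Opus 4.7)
The plan is to deduce Theorem \ref{Th11} as an immediate specialization of Theorem \ref{princsingolare} to the unit sphere. Concretely, I would pick $(\M, g) := (\Sp^d, h)$ and take the potential $\alpha : \Sp^d \to \R$ to be the constant function $\alpha(\sigma) := s(1-s-d)$. The assumption $1-d < s < 0$ yields simultaneously $s < 0$ and $1-s-d < 0$, so the product $s(1-s-d)$ is strictly positive and $\alpha$ belongs to $\Lambda_+(\Sp^d)$. Moreover, the norm equivalence \eqref{equivalence} ensures that $H^2_\alpha(\Sp^d) = H^2_1(\Sp^d)$ as topological vector spaces, matching the ambient space used in the statement.

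Next, recalling that $Vol_h(\Sp^d) = \omega_d$, the auxiliary function $m_\mu$ appearing in Theorem \ref{princsingolare} specializes, term by term, to the one displayed in Theorem \ref{Th11}, provided the constants $c_{2^*}$ are understood as the sharp constants of the embedding $H^2_\alpha(\Sp^d) \hookrightarrow L^{2^*}(\Sp^d)$ (cf. Remark \ref{costanti}). The hypothesis $K \in \Lambda_+(\Sp^d)$ is part of our data, and $f : [0,+\infty) \to [0,+\infty)$ is continuous with the subcritical growth prescribed by $(f_1')$; hence every assumption of Theorem \ref{princsingolare} is in force.

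Applying Theorem \ref{princsingolare} then yields an open interval $\Lambda_\mu \subseteq \bigl(0, \max_{[0,+\infty)} m_\mu\bigr)$ such that, for each $\lambda \in \Lambda_\mu$, there exists a nontrivial weak solution $\tilde w_{0,\lambda,\mu} \in H^2_1(\Sp^d)$, strictly positive almost everywhere on $\Sp^d$, to the singular Yamabe-type equation in the statement. No serious obstacle is anticipated: the genuine analytic work (sequential weak lower semicontinuity on small balls via Lemma \ref{lemmasemicontinuita}, the local-minimization argument via Lemma \ref{proprietaEtilde}, cancellation of the critical term, and the handling of the singular contribution $w^{r-1}$) has already been carried out at the level of a general compact manifold. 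The passage to the sphere is purely a matter of bookkeeping the replacement $Vol_g(\M) \rightsquigarrow \omega_d$ and verifying the positivity of the constant coefficient $s(1-s-d)$, both of which have just been done.
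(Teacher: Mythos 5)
Your proposal is correct and coincides with the paper's own (essentially implicit) argument: Theorem \ref{Th11} is obtained exactly by specializing Theorem \ref{princsingolare} to $(\M,g)=(\Sp^d,h)$ with the constant potential $\alpha(\sigma):=s(1-s-d)$, whose positivity follows from $1-d<s<0$, and by replacing $Vol_g(\M)$ with $\omega_d$ in the auxiliary function $m_\mu$. Nothing further is needed.
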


\section*{Acknowledgements} 
The manuscript was realized under the auspices of the Italian MIUR project \textit{Variational methods, with applications to problems in mathematical physics and geometry} (2015KB9WPT 009). The second author was supported by the Slovenian Research Agency grants P1-0292, N1-0114, N1-0083, N1-0064, and J1-8131. The third author was partially supported by the INdAM-GNAMPA Project 2017 \textit{Metodi variazionali per fenomeni non-locali}.
 The authors  thank the referees for comments and suggestions.


\begin{thebibliography}{999}

	\bibitem{albrig2016lichnerowicz}
	G.~Albanese and M.~Rigoli,
	\newblock {\em Lichnerowicz-type equations on complete manifolds},
	\newblock {Adv. Nonlinear Anal.} {\bf 5} (2016), no.~3, {223--250}.
	
	\bibitem{aub1976equations}
	T.~Aubin,
	\newblock \textit{{\'{E}}quations diff\'{e}rentielles non lin\'{e}aires et probl\`{e}me de Yamabe concernant la courbure scalaire},
		\newblock {J. Math. Pures Appl.} \textbf{55} (1976), {269--296}.
	
	\bibitem{aub1998some}
	T.~Aubin,
	\newblock {Some Nonlinear Problems in Riemannian Geometry},
	{Springer Monographs in Mathematics},
		\newblock Springer, Berlin, (1998).
	
	\bibitem{bec1993sharp}
	W.~Beckner,
	\newblock \textit{Sharp Sobolev inequalities on the sphere and the
	Moser-Trudinger inequality},
	\newblock {Ann. of Math.} \textbf{2} (1993), no.~1, 213--242.
	
	\bibitem{brenir1983positive}
	H.~Br\'{e}zis and L.~Nirenberg,
	\newblock {\em Positive solutions of nonlinear elliptic equations involving
		critical Sobolev exponents},
		\newblock {Comm. Pure Appl. Math.} {\bf 36} (1983), {437--477}.
	
	\bibitem{cha1995on}
	J.~Chabrowski,
	\newblock \textit{On multiple solutions for the nonhomogeneous $p$-Laplacian
	with a critical Sobolev exponent},
	\newblock {Differ. Integr. Equ.} \textbf{8} (1995), 705--716.
	
	\bibitem{cotili1991equations}
	A.~Cotsiolis and D.~Iliopoulos,
	\newblock \textit{{\'{E}}quations elliptiques non lin\'{e}aires sur $S^n$. Le
		probl\`{e}me de Nirenberg},
	\newblock {C.~R. Acad. Sci. Paris, S\'{e}r. I Math.}
		\textbf{313} (1991), 607--609.
	
	\bibitem{cotili1995equations}
	A.~Cotsiolis and D.~Iliopoulos,
	\newblock \textit{{\'{E}}quations elliptiques non lin\'{e}aires \`{a}
		croissance de Sobolev sur-critique},
		\newblock Bull. Sci. Math. \textbf{119} (1995), 419--431.
	
	\bibitem{dupgherad2007lane}
	L.~Dupaigne, M.~Ghergu, and V.~R\u{a}dulescu,
	\newblock \textit{Lane-Emden-Fowler equations with convection and singular potential},
	\newblock {J. Math. Pures Appl.} \textbf{87} (2007), no.~6, 563--581.
	
	\bibitem{farfar2015a}
	F.~Faraci and C.~Farkas,
	\newblock \textit{A quasilinear elliptic problem involving critical Sobolev
		exponents},
	\newblock {{Collect. Math.}} \textbf{66} (2015), 243--259.
	
    \bibitem{filpucrob2009on}
	R.~Filippucci, P.~Pucci, and F.~Robert,
	\newblock \textit{On a $p$-Laplace equation with multiple critical nonlinearities},
	\newblock {{J. Math. Pures Appl.}} \textbf{91} (2009), no.~2, 156--177.
	
	\bibitem{gazruf1997lower}
	F.~Gazzola and B.~Ruf,
	\newblock \textit{Lower-order perturbations of critical growth nonlinearities
		in semilinear elliptic equations},
	\newblock {{Adv. Differential Equations}} \textbf{2} (1997), 555--572.
	
	\bibitem{hajmolvil2016existence}
	H.~Hajaiej, G.~Molica~Bisci, and L.~Vilasi,
	\newblock \textit{Existence results for a critical fractional equation},
	\newblock {{Asymptot. Anal.}} \textbf{100} (2016), no.~3-4, 209--225.
	
	\enlargethispage{1em}
	\bibitem{heb2000nonlinear}
	E.~Hebey,
	\newblock {Nonlinear Analysis on Manifolds: Sobolev Spaces and Inequalities},
	{Courant Lecture Notes in Mathematics},
	\newblock Amer. Math. Soc. and Courant Inst. Math. Sci., New York, (1999).
	
	\bibitem{kazwar1975scalar}
	J.~L. Kazdan and F.~W. Warner,
	\newblock \textit{Scalar curvature and conformal deformation of Riemannian
		structure},
	\newblock {{J. Differ. Geom.}} \textbf{10} (1975), 113--134.
	
	\bibitem{krirad2009sublinear}
	A.~Krist\'{a}ly and V.~R\u{a}dulescu,
	\newblock \textit{Sublinear eigenvalue problems on compact Riemannian manifolds
		with applications in Emden-Fowler equations},
	\newblock {{Stud. Math.}} \textbf{191} (2009), 237--246.
	
	\bibitem{kriradvar2010variational}
	A.~Krist\'{a}ly, V.~R\u{a}dulescu, and C.~S. Varga,
	\newblock {Variational Principles in Mathematical Physics,
			Geometry, and Economics: Qualitative Analysis of Nonlinear Equations and
			Unilateral Problems}, {Encyclopedia of Mathematics and its Applications},
	Vol.~{136},
	\newblock Cambridge University Press, {Cambridge}, (2010).
	
	\bibitem{lio1985the}
	P.~L. Lions,
	\newblock \textit{The concentration--compactness principle in the calculus of
		variations. The limit case},
	\newblock {{Rev. Mat. Iberoamericana}} \textbf{1} (1985), 145--201.
	
	\bibitem{mawmol2017a}
	G.~Molica~Bisci and J.~Mawhin,
	\newblock \textit{A Brezis-Nirenberg type result for a nonlocal fractional operator},
	\newblock {{J. Lond. Math. Soc.}} \textbf{95} (2017) no.~1, 73--93.
	
	\bibitem{molrep2017yamabe}
	G.~Molica~Bisci and D.~Repov\v{s},
	\newblock \textit{Yamabe-type equations on Carnot groups},
	\newblock {Potential Anal.} \textbf{46} (2017), no.~2, 369--383.
	
	\bibitem{molvil2020isometry}
	G.~Molica~Bisci and L.~Vilasi,
	\newblock \textit{Isometry-invariant solutions to a critical problem on non-compact Riemannian manifolds},
	\newblock {J. Differential Equations} \textbf{269} (2020), 5491--5519.
	
    \bibitem{sch1984conformal}
	R.~Schoen,
	\newblock \textit{Conformal deformation of a Riemannian metric to constant scalar curvature},
	\newblock {J. Differ. Geom.} \textbf{20} (1984), 479--495.
	
	\bibitem{squ2004two}
	M.~Squassina,
	\newblock \textit{Two solutions for inhomogeneous nonlinear elliptic equations
		at critical growth},
	\newblock {NoDEA Nonlinear Differential Equations Appl.} \textbf{11} (2004), no.~1, 53--71.
	
	\bibitem{tar1992on}
	G.~Tarantello,
	\newblock \textit{On nonhomogeneous elliptic equations involving critical
		Sobolev exponent},
	\newblock {Ann. Inst. H. Poincar\'{e} Anal. Non Lin\'{e}aire}
	\textbf{9} (1992), 281--304.
	
	\bibitem{tru1968remarks}
	N.~Trudinger,
	\newblock \textit{Remarks concerning the conformal deformation of Riemannian
		structures on compact manifolds},
	\newblock {Ann. Scuola Norm. Sup. Pisa} \textbf{22} (1968), 265--274.
	
	\bibitem{vazver1991solutions}
	J.~L. V\'{a}zquez and L.~V\'{e}ron,
	\newblock \emph{Solutions positives d'\'{e}quations elliptiques
		semi-lin\'{e}aires sur des vari\'{e}t\'{e}s riemanniennes compactes},
	\newblock {C.~R. Acad. Sci. Paris, S\'{e}r. I Math.}
	\textbf{312} (1991), 811--815.
	
	\bibitem{yam1960on}
	H.~Yamabe,
	\newblock \emph{On a deformation of Riemannian structures on compact
		manifolds},
	\newblock {Osaka J. Math.} \textbf{12} (1960), 21--37.
	
\end{thebibliography}
\end{document}